\newtheorem{theorem}{Theorem}
\newtheorem{conjecture}[theorem]{Conjecture}
\newtheorem{lemma}[theorem]{Lemma}
\newtheorem{corollary}[theorem]{Corollary}
\newtheorem{proposition}[theorem]{Proposition}
\begin{document}

\author{Ben D. Lund\footnote{lund.ben@gmail.com. Department Of Computer Science, 814 Rhodes Hall, Cincinnati, OH 45221}\\
George B. Purdy\footnote{george.purdy@uc.edu. Department Of Computer Science, 814 Rhodes Hall, Cincinnati, OH 45221}\\
Justin W. Smith\footnote{smith5jw@mail.uc.edu. Department Of Computer Science, 814 Rhodes Hall, Cincinnati, OH 45221} }

\title{A Bichromatic Incidence Bound and an Application}
\maketitle

\begin{abstract}

We prove a new, tight upper bound on the number of incidences between points and hyperplanes in Euclidean d-space.
Given $n$ points, of which $k$ are colored red, there are $O_d(m^{2/3}k^{2/3}n^{(d-2)/3} + kn^{d-2} + m)$ incidences between the $k$ red points and $m$ hyperplanes spanned by all $n$ points provided that $m = \Omega(n^{d-2})$.
For the monochromatic case $k = n$, this was proved by Agarwal and Aronov \cite{AA92}.

We use this incidence bound to prove that a set of $n$ points, no more than $n-k$ of which lie on any plane or two lines, spans $\Omega(nk^2)$ planes.
We also provide an infinite family of counterexamples to a conjecture of Purdy's \cite{EP95} on the number of hyperplanes spanned by a set of points in dimensions higher than 3, and present new conjectures not subject to the counterexample.
\end{abstract}

\section{Introduction}
We consider higher dimensional generalizations of two classical extremal problems from planar combinatorial geometry, and use our upper bound on the number of incidences between points and hyperplanes to obtain a lower bound on the number of planes spanned by a point set.

A point and line are incident if the point is on the line.
Szemer\'edi and Trotter \cite{SzTr83} proved the tight result that there are $O(m^{2/3}n^{2/3} + m + n)$ incidences between $n$ points and $m$ lines in the plane (Lemma \ref{SzemerediTrotter}, below).
Since then, various generalizations of this upper bound have been proved or conjectured - see \cite{PaSh04} for a survey.

One natural generalization is to determine an upper bound on the number of incidences between points and hyperplanes in $\mathbb{E}^d$.
In order to prove an interesting upper bound for this problem, we must restrict the class of admissible arrangements.
Without such a restriction, the trivial upper bound of $mn$ can be attained by placing the $n$ points on a single $(d-2)$-flat that is covered by each of the $m$ hyperplanes.
A number of upper bounds with various restrictions on admissible arrangements have been proved  \cite{EdGuSh90, EdSh90, AA92, BrKn03, ElTo05}.

A hyperplane in $\mathbb{E}^d$ is spanned by a set of points if it passes through $d$ points that do not all lie on a $(d-2)$-flat.
Agarwal and Aronov \cite{AA92} showed that there are $O_d(m^{2/3}n^{d/3} + n^{d-1})$ incidences between $n$ points and $m$ hyperplanes spanned by the points, provided that $m = \Omega(n^{d-2})$.
(The subscript on $O_d$ indicates that the constant implicit in the asymptotic notation depends on $d$).
This problem had been previously considered by Edelsbrunner et. al. \cite{EdGuSh90}, and a construction achieving the asymptotic bound was provided by Edelsbrunner \cite[p. 112]{Ed87}.

In section \ref{sec:BichromaticIncidenceBound}, we show that there are $O_d(m^{2/3}k^{2/3}n^{(d-2)/3} + kn^{d-2} +m)$ incidences between $k$ red points and $m$ hyperplanes if the hyperplanes are spanned jointly by the red points and $n-k$ blue points, provided that $m = \Omega(n^{d-2})$.
In the same section, we provide a construction achieving this asymptotic bound.
This bound is equivalent to Agarawal and Aronov's in the limiting case that all of the hyperplanes are red (i.e., $k = n$).

Although the bichromatic incidence bound is interesting in its own right, our investigation of it is motivated by our interest in a second question:
What is the minimum number of hyperplanes spanned by a set of points?
To obtain a non-trivial answer to this question, some restriction on the point set must be assumed even in two dimensions, since a set of collinear points determines exactly one line.

The specific results we are most interested in extending are the weak Dirac \cite{SzTr83, Be83} and Beck-Erd\H{o}s \cite{Be83} theorems (Lemmas \ref{WeakDirac} and \ref{BeckErdos}, below).

In contrast to the numerous higher dimensional generalizations of the Szemer\'edi-Trotter theorem, there are few extremal results on the number of hyperplanes spanned by a set of points in more than two dimensions.
Hansen proved that a set of points in $\mathbb{E}^d$ that do not all lie on a single hyperplane determines at least one ordinary hyperplane, where an ordinary hyperplane is defined as a hyperplane with all but one of its points on a $(d-2)$-flat \cite{Han65}.
Beck proved that a set of points in $\mathbb{E}^d$, not too many of which lie on any single hyperplane, determines $\Omega_d(n^d)$ hyperplanes \cite{Be83} (Lemma \ref{TwoExtremes}).

Our main result on the number of hyperplanes spanned by a set of points, presented in section \ref{sec:Beck-ErdosR3}, is a three dimensional analog to the Beck-Erd\H{o}s theorem.
We show that a set of points in $\mathbb{E}^3$, no more than $n-k$ of which lie on a plane or any pair of skew lines, determines $\Omega(nk^2)$ planes.

In section \ref{sec:WeakPurdy}, we provide an infinite family of counterexamples to a conjecture of Purdy on the number of hyperplanes spanned by a set of points in $\mathbb{E}^d$ \cite{EP95}, and provide a modified conjecture not refuted by the counterexample.
We also conjecture a generalization of the Beck-Erd\H{o}s theorem that encompasses the three dimensional Beck-Erd\H{o}s analog we prove in section \ref{sec:Beck-ErdosR3}.

\subsection{Theorems in planar combinatorial geometry}
\label{sec:BasicTheorems}

Our proofs in higher dimensions depend on results from planar combinatorial geometry.
Although these theorems were proved in the plane, they apply to points and lines in any dimension since collinear points remain collinear and distinct under projection to a suitable plane.

We use the Szemer\'edi-Trotter incidence bound.
\begin{lemma}[Szemer\'edi, Trotter \cite{SzTr83}]\label{SzemerediTrotter}
There are $O(m^{2/3}n^{2/3} + m + n)$ incidences between $n$ points and $m$ lines in $\mathbb{E}^2$.
\end{lemma}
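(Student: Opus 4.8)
The plan is to prove this bound via Sz\'ekely's crossing-number argument, which reduces the incidence problem to a single clean inequality about graph drawings. First I would discard all lines incident to at most one point, since these contribute at most $m$ incidences; write $I$ for the number of incidences in the remaining configuration, where every surviving line is incident to at least two points. I would then build a graph $G$ drawn in the plane: its vertices are the $n$ points, and along each line $\ell$ carrying $k_\ell \geq 2$ of them I join consecutive points by straight-line segments, contributing $k_\ell - 1$ edges. Summing over lines, the edge count is
\[
E = \sum_{\ell} (k_\ell - 1) = I - m,
\]
while the vertex count is $V = n$.

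The key external input is the crossing-number inequality: any simple graph drawn in the plane with $E \geq 4V$ edges has at least $E^3/(64 V^2)$ crossings. I would recall its proof in two stages. Euler's formula first gives $\mathrm{cr}(G) \geq E - 3V$ for any drawing, and then a random vertex-sampling argument (keep each vertex independently with probability $p$ and optimize $p \approx V/E$) amplifies this to the cubic bound. Granting this, I bound the crossings of our particular straight-line drawing from above: two edges of $G$ can cross only when they lie on two distinct lines, so the total number of crossings is at most $\binom{m}{2} < m^2/2$.

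Combining the two estimates is then a short calculation. In the regime $E = I - m \geq 4n$, the crossing-number inequality together with the geometric upper bound yields
\[
\frac{(I-m)^3}{64 n^2} \leq \mathrm{cr}(G) < \frac{m^2}{2},
\]
so $I - m = O(m^{2/3} n^{2/3})$ and hence $I = O(m^{2/3} n^{2/3} + m)$. In the complementary regime $I - m < 4n$ we obtain $I < m + 4n$ directly, which is absorbed into the $+n$ term. Adding back the discarded lines leaves the overall bound $O(m^{2/3} n^{2/3} + m + n)$.

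I expect the main obstacle to be the crossing-number inequality itself rather than its application: the Euler-formula base case is routine, but the probabilistic amplification step — choosing the sampling probability and checking that the expected edge and crossing counts behave as required — is where the cubic exponent, and therefore the $2/3$ exponents in the final bound, genuinely originates. An alternative route is the original Szemer\'edi--Trotter cell-decomposition argument, but it is considerably more delicate, and I would fall back on it only if a self-contained crossing-number inequality were unavailable.
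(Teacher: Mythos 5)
The paper does not prove this statement at all: it is quoted as background (Lemma \ref{SzemerediTrotter}) with a citation to Szemer\'edi and Trotter, so there is no in-paper argument to compare against. Your proposal is the standard Sz\'ekely crossing-number proof, and it is correct: the reduction to lines with at least two points, the edge count $E=\sum_\ell(k_\ell-1)=I-m$, the upper bound of $\binom{m}{2}$ on crossings of the straight-line drawing (two edges on the same line cannot cross, and two distinct lines meet at most once), and the case split on whether $E\geq 4V$ all fit together as you describe. Two minor points worth tightening if you write this out in full: the graph is simple because two points determine a unique line, so no multi-edges arise from the ``consecutive points'' construction; and the crossing-number inequality bounds $\mathrm{cr}(G)$, the minimum over all drawings, which is at most the number of crossings of your particular straight-line drawing --- the inequality chain $\frac{(I-m)^3}{64n^2}\leq\mathrm{cr}(G)\leq\binom{m}{2}$ should be read in that order. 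This route is genuinely simpler than the original cell-decomposition proof in the cited source, and it is the proof one would normally present today.
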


We also rely on three lower bounds on the number of lines spanned by a set of points in the plane.
The first of these, conjectured independently by Dirac \cite{Dir51} and Motzkin \cite{Mot51}, and proved independently by Szemer\'edi and Trotter \cite{SzTr83} and Beck \cite{Be83}, is known as the weak Dirac theorem.
\begin{lemma}[Szemer\'edi, Trotter, Beck \cite{SzTr83, Be83}]\label{WeakDirac}
A set of $n$ points in $\mathbb{E}^2$ that are not all collinear contains a point incident to $\Omega(n)$ lines.
\end{lemma}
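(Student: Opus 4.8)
The plan is to prove the contrapositive by a double-counting argument driven by Lemma~\ref{SzemerediTrotter}. For each point $p_i$ let $t_i$ denote the number of connecting lines (lines spanned by the point set) through $p_i$, and suppose toward a contradiction that $t_i < cn$ for every $i$, where $c>0$ is a small absolute constant to be fixed at the end; the goal is to show that this forces all $n$ points onto one line, contradicting the hypothesis. I would first record the standard consequence of Lemma~\ref{SzemerediTrotter} that, for any $k \ge 2$, the number of connecting lines incident to at least $k$ of the points is $O(n^2/k^3 + n/k)$; this follows from the incidence bound by a routine dyadic summation and will be the only external input.

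The first observation is that the degree assumption already caps how many points can lie on a single line. If some connecting line $\ell$ carried at least $cn$ points, then, since the points are not all collinear, there is a point $q \notin \ell$, and $q$ is joined to the points of $\ell$ by at least $cn$ \emph{distinct} lines (no two points of $\ell$ share a line through $q$). This gives $t_q \ge cn$, contradicting the assumption; hence every connecting line has fewer than $cn$ points.

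Next I would localize the count at each point. Fix $p_i$; its $n-1$ partners are distributed among its $t_i < cn$ lines, so the lines through $p_i$ carrying fewer than $r := \lceil 1/(2c)\rceil$ points together account for fewer than $r\,t_i < n/2$ partners. Consequently at least $(n-1)/2$ of the partners of $p_i$ are reached along lines with at least $r$ points. Summing over all $p_i$ and halving to pass from ordered to unordered pairs, the number of point-pairs lying on \emph{rich} lines (those with at least $r$ points) is $\Omega(n^2)$. On the other hand, combining the corollary of Lemma~\ref{SzemerediTrotter} with the bound that no line has $cn$ points, a dyadic sum shows the number of pairs on rich lines is $O(n^2/r) + O(n\cdot cn) = O(cn^2)$. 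Choosing $c$ small enough makes this $O(cn^2)$ strictly smaller than the $\Omega(n^2)$ lower bound, the desired contradiction.

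The main obstacle is the large-line regime: in the dyadic estimate the contribution of lines with close to $n$ points is of order $n^2$ and would swamp the bound, so the whole scheme rests on first disposing of large lines in the second step, which simultaneously supplies the high-degree point in the one case where such lines can occur. The remaining delicate point is purely quantitative: one must track the interaction between the richness threshold $r \approx 1/(2c)$ and the line-size cap $cn$ so that both contributions are genuinely $O(cn^2)$, which is exactly what upgrades the easy $\Omega(\sqrt{n})$ estimate to the full $\Omega(n)$ bound.
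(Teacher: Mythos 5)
Your argument is correct, and it is essentially Beck's original proof of the weak Dirac theorem; note that the paper does not prove Lemma~\ref{WeakDirac} at all but cites it as known (Szemer\'edi--Trotter and Beck), so there is no in-paper proof to compare against. All the key steps check out: the reduction to ``no connecting line carries $cn$ points'' via the off-line point $q$, the local count showing each point has at least roughly $n/2$ partners on $r$-rich lines (so $\Omega(n^2)$ pairs on rich lines), and the dyadic upper bound $\sum_{\ell\ \mathrm{rich}}\binom{k_\ell}{2} = O(n^2/r) + O(cn^2) = O(cn^2)$ using the consequence of Lemma~\ref{SzemerediTrotter} that at most $O(n^2/k^3 + n/k)$ lines carry $\ge k$ points (which in fact follows directly from the incidence bound, no dyadic summation needed for that step). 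Choosing $c$ small enough against the absolute constants yields the contradiction, and the negation of the degree hypothesis is exactly the conclusion of the lemma. The only quantitative slop is that with $r=\lceil 1/(2c)\rceil$ the poor-line contribution is at most $(r-2)t_i$ rather than $rt_i$, which only helps you; the constants all work out.
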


The second result we use on the number of lines spanned by a set of points was conjectured by Erd\H{o}s \cite{Er75, Er81} and proved by Beck. Purdy had previously shown it to be a consequence of the weak Dirac \cite{Pur81}. It is known as the Beck-Erd\H{o}s theorem. 
\begin{lemma}[Beck \cite{Be83}]\label{BeckErdos}
A set of $n$ points in $\mathbb{E}^2$ of which at most $n-k$ are collinear determines $\Omega(nk)$ lines.
\end{lemma}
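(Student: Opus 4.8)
The plan is to derive the bound from the weak Dirac theorem (Lemma~\ref{WeakDirac}) by a peeling argument rather than from Szemer\'edi--Trotter directly, following the route by which Purdy is credited with obtaining it. First I would reduce to the case $k \le n/2$: if $k > n/2$, then at most $n-k < n/2$ points are collinear, so in particular at most $n-\lfloor n/2\rfloor$ are collinear, and applying the bound with the smaller parameter $k' = \lfloor n/2\rfloor$ gives $\Omega(nk') = \Omega(n^2) = \Omega(nk)$ lines since $k \le n$. So assume $k \le n/2$.

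The main idea is to apply weak Dirac repeatedly, each time peeling off one high-degree point. Let $P_0$ be the point set, $|P_0| = n$, and let $c$ be the constant from Lemma~\ref{WeakDirac}. For $t = 1, 2, \dots$, as long as $P_{t-1}$ is not collinear, Lemma~\ref{WeakDirac} supplies a point $q_t \in P_{t-1}$ lying on at least $c\,|P_{t-1}| = c(n-t+1)$ lines spanned by $P_{t-1}$; set $P_t = P_{t-1}\setminus\{q_t\}$. Since every collinear subset of $P_0$ has at most $n-k$ points, the set $P_{t-1}$, of size $n-t+1$, is non-collinear whenever $t \le k$, so the process runs for at least $k$ steps. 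I would run it for $T := \lfloor \alpha k\rfloor$ steps, where $\alpha$ is a small constant depending on $c$ chosen below; throughout these steps $|P_{t-1}| \ge n - T \ge n/2$, so each $q_t$ lies on at least $cn/2$ spanned lines.

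The crux is to count \emph{distinct} lines without over-counting those that pass through several of the removed points $q_t$. I would charge each spanned line to the earliest index $t$ with $q_t$ on it, and count, for each step $t$, only the lines through $q_t$ that avoid every earlier point $q_1,\dots,q_{t-1}$; these charged sets are pairwise disjoint by construction. At step $t$, at most $t-1$ of the lines through $q_t$ produced by weak Dirac can pass through an earlier removed point, since there is only one candidate line $q_tq_{t'}$ for each $t' < t$. Hence at least $c(n-t+1) - (t-1) \ge cn/2 - \alpha k \ge cn/2 - \alpha n/2$ lines are charged to step $t$. Choosing $\alpha = c/2$ makes this at least $cn/4$, and summing over the $T = \lfloor (c/2)k\rfloor$ steps yields $\Omega(nk)$ distinct lines.

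The main obstacle is exactly this distinctness bookkeeping. Weak Dirac only guarantees many lines through a single point, so the real work is in guaranteeing that the lines harvested in different rounds are genuinely different; this is what forces the peeling to halt after a constant fraction of $k$ rounds, and it is why the correction term $-(t-1)$ must be dominated by the main term $c(n-t+1)$, fixing the admissible range of $\alpha$ and pinning down the final $\Omega(nk)$ count. A secondary point to verify carefully is that a line counted at step $t$ via weak Dirac within $P_{t-1}$ is charged to $t$ in the global picture, i.e.\ that it contains no removed point earlier than $q_t$ — which is precisely what the ``avoid every earlier $q_{t'}$'' restriction enforces.
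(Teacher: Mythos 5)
The paper does not actually prove this statement: Lemma~\ref{BeckErdos} is quoted from Beck \cite{Be83} as a black box, with only the historical remark that Purdy had earlier derived it from the weak Dirac theorem. Your argument supplies exactly such a derivation, and it is correct. The reduction to $k \le n/2$ works; the peeling is legitimate because $P_{t-1}$ has $n-t+1 > n-k$ points for $t \le k$ and so cannot be collinear; and your distinctness bookkeeping is the right one --- a line through $q_t$ and an earlier $q_{t'}$ is determined by that pair of points, so at most $t-1$ of the at least $c(n-t+1)$ lines that weak Dirac yields at step $t$ are discarded, and charging each surviving line to the earliest pivot it contains makes the counted families pairwise disjoint. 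With $\alpha = c/2$ each step contributes at least $cn/4$ new lines, and summing over $T=\lfloor (c/2)k\rfloor$ steps gives $\Omega(nk)$. The one loose end is the degenerate range $k < 2/c$, where $T=0$ and your sum is empty; there a single application of Lemma~\ref{WeakDirac} to $P_0$ (which is non-collinear since $k\ge 1$) already yields $\Omega(n)=\Omega(nk)$ lines, so the gap is cosmetic and worth one added sentence. Compared with Beck's original proof, which runs through a Szemer\'edi--Trotter-type dichotomy, your route is more elementary relative to what the paper already assumes, since Lemma~\ref{WeakDirac} is taken as given there; it of course inherits whatever machinery is hidden inside that lemma.
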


We will refer to the third result on lines spanned by points that we use as Beck's lemma.
It was first stated and proved by Beck, and was used by Beck to prove the Beck-Erd\H{o}s and weak Dirac theorems.
\begin{lemma}[Beck \cite{Be83}]\label{TwoExtremesInE2}
A set of $n$ points in $\mathbb{E}^2$ of which at most $n/100$ are collinear determines $\Omega(n^2)$ lines.
\end{lemma}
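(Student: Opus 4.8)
The plan is to count incident point pairs and apply the Szemer\'edi--Trotter bound (Lemma \ref{SzemerediTrotter}) through a dyadic decomposition of the spanned lines by richness. Since the $n$ points determine $\binom{n}{2} = \Theta(n^2)$ pairs, and each pair lies on exactly one spanned line, it suffices to show that a constant fraction of these pairs lie on lines containing only $O(1)$ points each; every such \emph{poor} line is then charged $O(1)$ pairs, forcing $\Omega(n^2)$ of them.

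First I would record the standard consequence of Lemma \ref{SzemerediTrotter} that the number of lines meeting at least $t$ of the points is $O(n^2/t^3 + n/t)$. Indeed, if $m$ lines each contain at least $t$ points then they carry at least $mt$ incidences, and substituting $mt \le C(m^{2/3}n^{2/3} + m + n)$ into a short case analysis on which term dominates yields $m = O(n^2/t^3 + n/t)$.

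Next I would group the lines into dyadic classes $L_j$ consisting of lines incident to between $2^j$ and $2^{j+1}$ points. By the previous step $|L_j| = O(n^2/2^{3j} + n/2^j)$, and each line of $L_j$ covers at most $\binom{2^{j+1}}{2} = O(4^j)$ pairs, so the pairs covered by $L_j$ number $O(n^2/2^j + n\,2^j)$. I would then bound the total number of pairs lying on \emph{rich} lines---those with at least some large absolute constant $C_0$ points---by summing this estimate over all $j$ with $2^j \ge C_0$. The geometrically decreasing term sums to $O(n^2/C_0)$, which is made small by taking $C_0$ large. The increasing term $\sum n\,2^j$ is where the hypothesis enters: because no line contains more than $n/100$ points, the sum terminates at $2^j \le n/100$ and is dominated by its final term, contributing $O(n^2/100)$.

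The main obstacle is precisely this last contribution: I must guarantee that the rich pairs total is strictly less than $\binom{n}{2}$ by a fixed constant fraction, so that the remaining (poor) pairs still number $\Omega(n^2)$. Enlarging $C_0$ kills the decreasing term, but the increasing term is pinned by the collinearity cap rather than by $C_0$, so I would verify that the Szemer\'edi--Trotter constant together with the bound $n/100$ keeps $\sum n\,2^j$ below, say, $\tfrac12\binom{n}{2}$; should the explicit constant demand it, the identical argument goes through with $n/100$ replaced by $n/C_1$ for a suitable absolute constant $C_1$, which only affects the implied constant in the conclusion. With the rich pairs thus bounded by a fraction of $\binom{n}{2}$, at least $\Omega(n^2)$ pairs lie on lines with fewer than $C_0$ points, and since each such line accounts for $O(1)$ pairs, the point set spans $\Omega(n^2)$ lines.
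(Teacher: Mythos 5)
The paper offers no proof of this lemma: it is quoted from Beck \cite{Be83} as a known black box (and in fact is never invoked elsewhere in the paper), so there is no in-paper argument to compare against. Your proposal is the standard modern derivation of Beck's theorem from Szemer\'edi--Trotter, and its structure is sound: the reduction to counting pairs, the bound $O(n^2/t^3 + n/t)$ on the number of $t$-rich lines, the dyadic summation, and the final charging step (each poor line carries $O(1)$ pairs, so $\Omega(n^2)$ surviving pairs force $\Omega(n^2)$ lines) are all correct.

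The one genuine weak point is exactly the one you flag, and the repair you propose does not do what you claim. The increasing geometric sum contributes on the order of $c\,n\cdot(n/100)$ rich pairs, where $c$ is inherited from the Szemer\'edi--Trotter constant; since Lemma \ref{SzemerediTrotter} is stated only in $O$-form, you cannot conclude from it that this stays below a fixed fraction of $\binom{n}{2}$. Replacing $n/100$ by $n/C_1$ with $C_1>100$ is not ``only a change in the implied constant in the conclusion'': it tightens the hypothesis and therefore proves a strictly weaker lemma, one that no longer covers a point set with exactly $n/100$ collinear points. What your argument actually establishes is the dichotomy ``either some line contains at least $n/C_1$ of the points, or the points span $\Omega(n^2)$ lines'' for some absolute constant $C_1$ determined by the Szemer\'edi--Trotter constant. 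To obtain the literal constant $100$ you must either import an explicit quantitative form of Szemer\'edi--Trotter (the best known multiplicative constants, around $2.5$, do make the numbers close, but this requires an actual computation you have deferred) or use Beck's original argument. For any use this paper could make of the lemma the version with an unspecified absolute constant would suffice, so the gap is quantitative rather than structural---but as a proof of the statement exactly as written, it is a gap.
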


\section{Bichromatic incidence bound}
\label{sec:BichromaticIncidenceBound}

\newcommand{\diracC}{\ensuremath{c_{dirac}}}
\newcommand{\hyps}{\ensuremath{\mathcal{H}}}
\newcommand{\hypnum}[1]{\ensuremath{H_{#1}}}
\newcommand{\hyp}[2]{\ensuremath{H^{(#2)}_{#1}}}
\newcommand{\linep}[2]{\ensuremath{L_{#1}^{(#2)}}}
\newcommand{\lines}[1]{\ensuremath{\mathcal{L}}^{(#1)}}
\newcommand{\redinc}[1]{\ensuremath{\mathcal{I}_{R}^{#1}}}
\newcommand{\blueinc}[1]{\ensuremath{\mathcal{I}_{B}^{#1}}}
\newcommand{\redlines}[1]{\ensuremath{\mathcal{L}_{R}^{(#1)}}}
\newcommand{\bluelines}[1]{\ensuremath{\mathcal{L}_{B}^{(#1)}}}

\newcommand{\khypsind}[1]{\ensuremath{k'}}
\newcommand{\nhypsind}[1]{\ensuremath{n'}}
\newcommand{\redhypsind}[1]{\ensuremath{\mathcal{H'}^{(#1)}_{R}}}
\newcommand{\bluehypsind}[1]{\ensuremath{\mathcal{H'}^{(#1)}_{B}}}

\newcommand{\redhyps}{\ensuremath{\mathcal{H}_{R}}}
\newcommand{\redhypsinc}[1]{\ensuremath{\mathcal{H}^{(#1)}_{R}}}
\newcommand{\bluehyps}{\ensuremath{\mathcal{H}_{B}}}
\newcommand{\bluehypsinc}[1]{\ensuremath{\mathcal{I}^{(#1)}_{B}}}
\newcommand{\hypspoint}[1]{\ensuremath{\mathcal{H}^{(#1)}}}
\newcommand{\incds}[2]{\ensuremath{\mathbf{I^d}\left(#1,#2\right)}}
\newcommand{\verts}{\ensuremath{\mathcal{P}}}
\newcommand{\vassigned}[1]{\ensuremath{\mathcal{P}^{(#1)}}}

In this section, we investigate upper bounds on the number of incidences between the vertices of an arrangement of red and blue hyperplanes, and the red hyperplanes of the arrangement.
This is dual to the incidence problem stated in the introduction.

\subsection{Incidences between hyperplanes and vertices of their arrangement}

Our results generalize those of Agarwal and Aronov \cite{AA92}, and Edelsbrunner \cite{Ed87}.
The problem they considered is as follows.

In $\mathbb{E}^d$, given a set $\hyps$ of hyperplanes and a subset $\verts$ of the vertices of their arrangement, let $\mathbf{I^d}(\verts, \hyps)$ denote the number of incidences between $\hyps$ and $\verts$.
Let $\mathbf{I^d}(m,n)$ be the maximum number of incidences over all such sets of $n$ hyperplanes and $m$ vertices of their arrangement.
That is, let
\[\mathbf{I^d}(m, n) = \max_{\substack{|\verts|=m\\
|\hyps|=n}}\mathbf{I^d}(\verts,\hyps).\]

Since each point-hyperplane pair determines at most $1$ incidence, $\mathbf{I^d}(m,n) = O(mn)$.
Edelsbrunner proved that this is tight in the case that $m=O(n^{d-2})$.
\begin{lemma}[Edelsbrunner, \cite{Ed87}]\label{mnBound}
If $d \geq 2$ and $m=O(n^{d-2})$, then
\[\mathbf{I^d}(m,n) = \Theta_d(mn).\]
\end{lemma}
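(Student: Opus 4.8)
The upper bound $\mathbf{I^d}(m,n) = O(mn)$ is immediate, as already observed, so the plan is to produce a matching lower bound by exhibiting, for every $d$, an arrangement of $\Theta(n)$ hyperplanes whose arrangement has $\Omega_d(n^{d-2})$ vertices, each lying on $\Omega(n)$ of the hyperplanes. Given such an arrangement, any instance with $m = O(n^{d-2})$ is handled by selecting $m$ of these heavy vertices: each contributes $\Omega(n)$ incidences, for a total of $\Omega(mn)$. Thus the whole problem reduces to a single construction, and the hardest regime is $m = \Theta(n^{d-2})$.

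First I would fix coordinates so that $F = \{x_{d-1} = x_d = 0\}$ is a distinguished $(d-2)$-flat, and place the candidate heavy vertices on a $(d-2)$-dimensional grid $\{(a_1, \dots, a_{d-2}, 0, 0) : 1 \le a_i \le t\}$ inside $F$, which has $t^{d-2}$ points. The hyperplanes come in two groups. The first is a pencil of $\Theta(n)$ hyperplanes all containing $F$, concretely the hyperplanes $x_{d-1} = s_j x_d$ for distinct slopes $s_j$ (together with $x_d = 0$). Every grid point, having $x_{d-1} = x_d = 0$, lies on every hyperplane of the pencil, so each grid point already collects $\Omega(n)$ incidences. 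The second group consists of the axis-parallel hyperplanes $\{x_i = c\}$ for $1 \le i \le d-2$ and $1 \le c \le t$; there are $(d-2)t$ of these, and choosing $t = \Theta(n)$ keeps the total number of hyperplanes at $\Theta(n)$ while leaving $t^{d-2} = \Theta(n^{d-2})$ grid points.

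The crucial step is to certify that each grid point is a genuine vertex, i.e.\ a $0$-dimensional flat of the arrangement, and not merely a point on the $(d-2)$-dimensional flat $F$ shared by the pencil. For a grid point $p = (a_1, \dots, a_{d-2}, 0, 0)$, I would exhibit $d$ hyperplanes through $p$ meeting only at $p$: the $d-2$ coordinate hyperplanes $\{x_1 = a_1\}, \dots, \{x_{d-2} = a_{d-2}\}$, whose common intersection is the $2$-flat on which $x_{d-1}, x_d$ range freely, together with two pencil hyperplanes $x_{d-1} = s_1 x_d$ and $x_{d-1} = s_2 x_d$ with $s_1 \ne s_2$, which cut this $2$-flat down to $x_{d-1} = x_d = 0$, leaving exactly $p$. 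This transversality argument is the main obstacle: the naive approach of adding one pinning hyperplane per grid point would cost $\Omega(n^{d-2})$ extra hyperplanes and blow the budget, whereas sharing a product grid among $O(n)$ axis-parallel hyperplanes lets the same hyperplanes simultaneously certify all $\Theta(n^{d-2})$ points as vertices.

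Finally I would collect the estimates: the arrangement uses $\Theta(n)$ hyperplanes (rescaled to exactly $n$), has $\Theta(n^{d-2})$ heavy vertices, and each such vertex is incident to $\Omega(n)$ hyperplanes, so restricting to $m$ of the vertices yields $\mathbf{I^d}(m,n) = \Omega(mn)$ for every $m = O(n^{d-2})$; with the trivial upper bound this gives $\mathbf{I^d}(m,n) = \Theta_d(mn)$. To make the reduction airtight I would tune the split between pencil size and grid size so that the number of heavy vertices exceeds whatever constant is hidden in the hypothesis $m = O(n^{d-2})$, check the degenerate boundary $d = 2$ (where the grid collapses to a single point and the pencil is simply $n$ concurrent lines), and confirm that the slopes $s_j$ and the grid coordinates can be chosen over $\mathbb{R}$ so that no unintended coincidences merge distinct hyperplanes or distinct vertices.
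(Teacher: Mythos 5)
Your construction is correct and is essentially Edelsbrunner's original one, which the paper simply cites for this lemma (and echoes in its bichromatic variant): a $(d-2)$-dimensional grid of points on the flat $x_{d-1}=x_d=0$, pinned as vertices by $O(n)$ axis-parallel hyperplanes and made heavy by a pencil of $\Omega(n)$ hyperplanes through that flat. The transversality check and the tuning of the grid size against the constant in $m=O(n^{d-2})$ are exactly the points that need care, and you handle both.
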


$\mathbf{I^d}(m,n)$ increases monotonically with $m$, so Lemma \ref{mnBound} implies that $\mathbf{I^d}(m,n) = \Omega_d(n^{d-1})$ when $m=\Omega(n^{d-2})$.
Edelsbrunner \cite{Ed87} also showed that $\mathbf{I^d}(m,n) = \Omega_d(m^{2/3}n^{d/3})$ when $m=\Omega(n^{d-2})$.
Agarwal and Aronov proved the corresponding upper bound for these cases \cite{AA92}.
\begin{lemma}[Agarwal, Aronov]\label{AgarwalAronov}
If $d \geq 2$ and $m = \Omega(n^{d-2})$, then
\[\mathbf{I^d}(m,n) = \Theta_d(m^{2/3}n^{d/3} + n^{d-1}).\]
\end{lemma}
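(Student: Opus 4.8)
The lower bound demands no new argument: Edelsbrunner's construction already supplies $\mathbf{I^d}(m,n) = \Omega_d(m^{2/3}n^{d/3})$ whenever $m = \Omega(n^{d-2})$, while Lemma \ref{mnBound}, combined with the fact that $\mathbf{I^d}(m,n)$ is monotone in $m$, yields $\mathbf{I^d}(m,n) = \Omega_d(n^{d-1})$. Taking whichever of the two is larger in a given regime produces the stated $\Omega_d(m^{2/3}n^{d/3} + n^{d-1})$. Thus the whole content of the lemma is the matching upper bound, and the plan is to establish it by induction on the dimension $d$.

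The base case $d = 2$ is the Szemer\'edi--Trotter theorem (Lemma \ref{SzemerediTrotter}), which gives $O(m^{2/3}n^{2/3} + m + n)$ incidences. Here the $m$ points are vertices of an arrangement of $n$ lines, so each lies on at least two lines and $m \le \binom{n}{2} = O(n^2)$. This forces $m \le m^{2/3}n^{2/3}$, absorbing the middle term and leaving $O(m^{2/3}n^{2/3} + n)$, which is exactly $m^{2/3}n^{d/3} + n^{d-1}$ at $d = 2$.

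For the inductive step I would descend from dimension $d$ to dimension $d-1$ using a $(1/r)$-cutting of the $n$ hyperplanes: a family of $O(r^d)$ relatively open simplices covering $\mathbb{E}^d$, each meeting the interior of at most $n/r$ of the hyperplanes, where $r$ is a parameter fixed at the end. I would then classify each incidence $(v,H)$ by the cell $\Delta$ whose closure contains $v$. When $H$ crosses the interior of $\Delta$, the incidence falls into a subproblem inside $\Delta$ controlled by at most $n/r$ hyperplanes, setting up a recursion; summing the induction hypothesis over the $O(r^d)$ cells and applying H\"older's inequality to bound $\sum_\Delta m_\Delta^{2/3} \le (r^d)^{1/3} m^{2/3}$ subject to $\sum_\Delta m_\Delta = m$ lets me control the crossing contribution. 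When instead $v$ lies on a facet of $\Delta$ and $H$ supports that facet, I would project $v$ and the incident hyperplanes into the $(d-1)$-flat carrying the facet and invoke the dimension-$(d-1)$ bound. Choosing $r$ to balance the two contributions should reproduce $m^{2/3}n^{d/3} + n^{d-1}$.

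The delicate point, and the main obstacle, is the bookkeeping at cell boundaries together with the treatment of degenerate vertices. Because a vertex of the arrangement lives on cell boundaries rather than in a cell interior, one must fix a consistent rule assigning each vertex to a single cell and then verify that every hyperplane incident to it is counted exactly once, either as a hyperplane crossing that cell or as one supporting a facet through the vertex, with no incidence lost and none double counted. Separately, vertices lying on an unusually large number of hyperplanes can destabilize the recursion and should be peeled off first; a vertex on $j$ hyperplanes is already determined by any $d$ of them, so it is responsible for $\binom{j}{d}$ of the at most $\binom{n}{d}$ possible $d$-tuples, and distinct vertices claim disjoint $d$-tuples, giving $\sum_v \binom{j_v}{d} \le \binom{n}{d}$. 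This inequality caps the number and the aggregate incidence count of high-multiplicity vertices so that their contribution does not exceed the target bound. Making the crossing-hyperplane recursion and the facet projection dovetail cleanly, without overcounting, is the crux of the argument.
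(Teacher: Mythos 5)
The paper does not prove this lemma from scratch: it is quoted from Agarwal and Aronov, and the machinery the paper actually develops is the bichromatic generalization (Theorem \ref{BichromaticAgarwalAronov}), whose $k=n$ case recovers the statement. Your lower-bound paragraph is fine and matches the paper's own remarks (Edelsbrunner's construction plus Lemma \ref{mnBound} and monotonicity in $m$). The upper bound, however, is where the content lies, and your cutting-based plan is not a proof: you yourself defer the crux (``the bookkeeping at cell boundaries \ldots is the main obstacle,'' ``making the \ldots recursion and the facet projection dovetail cleanly \ldots is the crux''). Beyond the acknowledged bookkeeping, the recursion does not visibly close. Summing the induction hypothesis over the $O(r^d)$ cells contributes $O(r^d (n/r)^{d-1}) = O(r\,n^{d-1})$ from the second term, which exceeds the target unless $r=O(1)$; but with constant $r$ the balanced term $m^{2/3}n^{d/3}$ recurs identically at every level and accumulates a $\log n$ factor. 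You also never verify the side condition $m_\Delta = \Omega((n/r)^{d-2})$ needed to invoke the inductive hypothesis inside a cell, and the dichotomy ``$H$ crosses the interior of $\Delta$'' versus ``$H$ supports a facet of $\Delta$'' is not exhaustive for a hyperplane through a boundary vertex of a simplex, so incidences can escape both cases.

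The deeper missing idea is the one the paper's argument (for the generalization) is built around. Any dimension-descent proof must ensure that when a vertex $v$ of the $d$-dimensional arrangement is pushed into a $(d-1)$-dimensional subproblem, (i) $v$ is still a \emph{vertex} of the induced arrangement there, and (ii) a constant fraction of the hyperplanes incident to $v$ induce \emph{distinct} $(d-2)$-flats through $v$; otherwise many hyperplanes collapse onto a common flat, the subproblem undercounts the incidences at $v$, and the inductive bound does not transfer back to the original count. The paper secures both properties by slicing the hyperplanes through $v$ with a generic $2$-plane and applying the dual weak Dirac theorem (Lemma \ref{WeakDirac}) to choose a hyperplane $H^{(P)}_i$ meeting the others in $\Omega(t)$ distinct $(d-2)$-flats, assigning $v$ to that hyperplane, and then summing the inductive bound over the $n$ hyperplanes with Jensen's inequality $\sum_i m_i^{2/3} \le n^{1/3} m^{2/3}$ --- no cuttings at all. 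Your facet-projection step has no analogue of this and, as written, does not even produce a well-defined arrangement of $(d-2)$-flats (a hyperplane containing the facet's carrier flat meets it in the whole flat, not in a $(d-2)$-flat). Without a mechanism guaranteeing (i) and (ii), the inductive step fails, so the proposal has a genuine gap and not merely unfinished bookkeeping.
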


\subsection{A bichromatic generalization}

In $\mathbb{E}^d$, given a set $\redhyps$ of red hyperplanes, a set $\bluehyps$ of blue hyperplanes, and a subset $\verts$ of the vertices of their combined arrangement, let $\mathbf{I^d}(\verts, \redhyps, \bluehyps)$ denote the number of incidences between $\redhyps$ and $\verts$.
Let $\mathbf{I^d}(m,k,n)$ be the maximum number of red hyperplane-point incidences over all such sets of $k$ red hyperplanes, $n-k$ blue hyperplanes, and $m$ vertices of their combined arrangement.
That is, let
\[\mathbf{I^d}(m, k, n) = \max_{\substack{|\verts|=m\\
|\redhyps|=k\\
|\bluehyps|=n-k}}\mathbf{I^d}(\verts,\redhyps,\bluehyps).\]

If $k = n$, there are no blue hyperplanes and this is exactly the problem considered by Edelsbrunner, Agarwal and Aronov.

\begin{theorem}\label{mnBichromaticBound}
If $d \geq 2$ and $m = O(n^{d-2})$, then
\[\mathbf{I^d}(m,k,n) = \Theta_d(mk).\]
\end{theorem}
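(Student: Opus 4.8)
The upper bound is immediate and carries no content: each of the $m$ chosen vertices lies on at most $k$ of the red hyperplanes, since only $k$ of them exist, so $\mathbf{I^d}(m,k,n)\le mk=O(mk)$. The entire task is therefore the matching lower bound $\mathbf{I^d}(m,k,n)=\Omega_d(mk)$, and the plan is to exhibit, for every admissible $(m,k)$ with $m=O(n^{d-2})$, an arrangement of $k$ red and $n-k$ blue hyperplanes together with $m$ vertices of their combined arrangement that carry $\Omega_d(mk)$ red incidences. The guiding principle is to decouple the two jobs a high-incidence vertex must perform: it must (i) lie on many red hyperplanes, and (ii) be a genuine $0$-dimensional face of the combined arrangement.

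The plan is to accomplish (i) with a \emph{pencil} and (ii) with \emph{cutting} hyperplanes. Fix a $(d-2)$-flat $F$ and pass all (or half) of the red hyperplanes through $F$; then every point of $F$ automatically lies on all of these red hyperplanes, supplying $\Omega(k)$ red incidences for free. To turn points of $F$ into actual vertices, I would add cutting hyperplanes in general position: each meets $F\cong\mathbb{E}^{d-2}$ in a $(d-3)$-flat, and a point of $F$ realized as the transversal intersection of the traces of $d-2$ cutting hyperplanes becomes, together with two of the pencil hyperplanes, the common zero-dimensional intersection of $d$ hyperplanes, hence a vertex. A generic family of $N$ cutting hyperplanes carves $\Theta_d(N^{d-2})$ such vertices inside $F$, each inheriting the full set of red incidences from the pencil; selecting $m$ of them yields $\Omega(mk)$ red incidences, provided $N^{d-2}=\Omega(m)$.

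The delicate point is guaranteeing enough cutting hyperplanes, and this is exactly where the hypothesis $m=O(n^{d-2})$ and the blue hyperplanes enter, so I would split into two regimes governed by a dimension-dependent threshold. If $m=O(k^{d-2})$, the red hyperplanes can cut themselves: I would invoke the monochromatic construction of Lemma \ref{mnBound} on the $k$ red hyperplanes alone (which carve $\Theta_d(k^{d-2})=\Omega(m)$ of their own vertices), place the $n-k$ blue hyperplanes generically so they avoid the chosen vertices, and observe $\mathbf{I^d}(m,k,n)\ge\mathbf{I^d}(m,k)=\Theta_d(mk)$, since adjoining hyperplanes neither destroys a vertex nor removes a red incidence. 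In the complementary regime the threshold can be chosen so that $m>O(k^{d-2})$ forces $k\le n/2$, whence $n-k=\Omega(n)$ blue hyperplanes are available; I would then put all $k$ red hyperplanes in the pencil through $F$ and use the $\Omega(n)$ blue hyperplanes as cutting hyperplanes, carving $\Theta_d(n^{d-2})=\Omega(m)$ vertices in $F$, each on all $k$ red hyperplanes.

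The main obstacle I anticipate is not the geometry but the bookkeeping of constants: the construction produces $\Theta_d(n^{d-2})$ vertices with a specific constant $c_d$, whereas $m=O(n^{d-2})$ only promises $m\le Cn^{d-2}$ for the problem's constant $C$, which may exceed $c_d$. I would dispatch this using monotonicity of $\mathbf{I^d}(\cdot,k,n)$ in its first argument: in the narrow leftover band $c_d n^{d-2}<m\le Cn^{d-2}$ one has $m=\Theta(n^{d-2})$, so $\mathbf{I^d}(m,k,n)\ge\mathbf{I^d}(\lfloor c_d n^{d-2}\rfloor,k,n)=\Omega(n^{d-2}k)=\Omega(mk)$. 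The only remaining care is genericity — distinct pencil hyperplanes, cutting traces in general position within $F$, and pairwise distinct chosen vertices — all of which can be secured by a small perturbation.
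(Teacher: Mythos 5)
Your proof is correct, but it reaches the lower bound by a genuinely different route than the paper. The paper's argument is a short averaging step: it takes the monochromatic extremal arrangement of Lemma \ref{mnBound}, which has $\Omega_d(mn)$ incidences between its $n$ hyperplanes and $m$ chosen vertices, colors red the $k$ hyperplanes with the most incidences, and notes that these top-$k$ hyperplanes must average at least as many incidences as the overall average of $\Omega_d(m)$ per hyperplane, giving $\Omega_d(mk)$ red incidences with no new construction at all. You instead build the extremal example explicitly: a pencil of red hyperplanes through a $(d-2)$-flat $F$, with generic blue ``cutting'' hyperplanes whose traces carve $\Theta_d(N^{d-2})$ vertices inside $F$, each inheriting all $k$ red incidences, plus a case split on $k\gtrless n/2$ to ensure enough cutters are available (in the regime $k\ge n/2$ you correctly fall back on Lemma \ref{mnBound} applied to the red hyperplanes alone). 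Your construction is sound — the vertices you produce really are $0$-dimensional faces of the combined arrangement (two pencil hyperplanes plus $d-2$ transversal cutters), and your monotonicity argument disposes of the constant mismatch — and it is essentially the Edelsbrunner-style construction the paper cites elsewhere. What the paper's approach buys is brevity and the fact that it needs no new geometry beyond Lemma \ref{mnBound}; what yours buys is an explicit, self-contained witness that also makes transparent \emph{why} the bound is achievable (the decoupling of ``lie on many red hyperplanes'' from ``be a vertex''), at the cost of a case analysis and some genericity bookkeeping. The only loose end worth tightening is the degenerate case $k=1$ (or $k$ bounded), where the pencil has too few members to certify a vertex by itself; there the claim $\Omega(mk)=\Omega(m)$ is trivial, but you should say so.
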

\begin{proof}
The upper bound is trivial.

For the lower bound, take an arrangement of $n$ hyperplanes and $m$ vertices of their arrangement such that the total number of incidences between the hyperplanes and vertices is $\Omega(mn)$.
Lemma \ref{mnBound} implies that such an arrangement exists.
Choose $k$ hyperplanes of the arrangement such that no other set of $k$ hyperplanes is incident to more of the vertices and color them red.
By construction, the average number of incidences between the red hyperplanes and vertices is at least as large as the average number of incidences between all the hyperplanes and the vertices.
Since the average number of incidences between all the hyperplanes and vertices is $\Omega(m)$, the total number of incidences between red hyperplanes and vertices must be $\Omega(mk)$.

\end{proof}

\begin{theorem}[Bichromatic incidence bound]\label{BichromaticAgarwalAronov}
If $d \geq 2$ and $m = \Omega(n^{d-2})$, then
\[\mathbf{I^d}(m, k, n) = \Theta_d(m^{2/3}k^{2/3}n^{(d-2)/3} + kn^{d-2} + m).\]
\end{theorem}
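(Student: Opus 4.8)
I would prove the two directions separately, and in the upper-bound direction mirror the inductive structure behind Lemma \ref{AgarwalAronov} while carrying the red/blue distinction through the whole argument, so that the \emph{counted} (red) hyperplanes contribute factors of $k$ and the ambient arrangement contributes factors of $n$. The complementary small regime $m = O(n^{d-2})$ is already handled by Theorem \ref{mnBichromaticBound}, which gives $\Theta_d(mk)$; this is consistent with the claimed bound at the overlap $m = \Theta(n^{d-2})$, where both reduce to $\Theta(kn^{d-2})$. So the real work is the regime $m = \Omega(n^{d-2})$, and within it the upper bound.

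The base case is the organizing sanity check. At $d = 2$ the red hyperplanes are $k$ red lines and the vertices are $m$ points, so Lemma \ref{SzemerediTrotter} gives $O(m^{2/3}k^{2/3} + m + k)$ red-line/point incidences — exactly the claimed bound with $n^{(d-2)/3} = n^{d-2} = 1$. This pins down the intended accounting: the two ``$2/3$'' factors come from the red lines at the base (yielding $k^{2/3}$), each of the $d-2$ dimension-reduction steps should contribute one factor $n^{1/3}$ (yielding $n^{(d-2)/3}$), the additive $+k$ at the base propagates to $kn^{d-2}$, and the additive $+m$ survives. My first attempt at the reduction $d \to d-1$ would be, for each hyperplane $H$ of the arrangement, to pass to the $(d-1)$-dimensional arrangement induced on $H$ by the remaining hyperplanes: a marked vertex on $H$ becomes a vertex there, restricted red hyperplanes stay red, and $\mathbf{I^{d-1}}$ applies on $H$. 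Summing over the $n$ choices of $H$ and converting $\sum_H m_H^{2/3}$ into $m^{2/3}$ times a power of $n$ by H\"older's inequality is what generates the extra $n^{1/3}$ per step.

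The hard part is that this naive ``restrict and sum'' recursion badly overcounts, and is in fact invalid, precisely in the presence of vertices of high incidence-degree — the very vertices that make the bound large. When many hyperplanes share a common lower-dimensional flat, restricting to a single $H$ collapses their traces, so the per-$H$ incidences cannot be charged back to a quantity like $I^2/m$ (the Cauchy--Schwarz step that the clean recursion would need). I expect the correct argument to route around this, as in Agarwal and Aronov's original proof, through a global $(1/r)$-cutting of the full arrangement: vertices interior to a cell meet at most $n/r$ hyperplanes and are treated by induction on the number of hyperplanes, while vertices on cell boundaries are pushed to a lower-dimensional instance. The genuinely new difficulty over the monochromatic case is the two-parameter bookkeeping — arranging the charging so that $k$, not $n$, occupies the two leading factors and the additive term — which I would handle by tracking red and total crossing-counts per cell separately and then optimizing $r$. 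A case analysis according to which of the three terms dominates, together with the crossover at $m = \Theta(n^{d-2})$, finishes the upper bound.

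For the matching lower bound I would give an explicit construction generalizing Edelsbrunner's \cite{Ed87}: take the planar Szemer\'edi--Trotter-tight configuration of $k$ red lines and $m$ points realizing $\Theta(m^{2/3}k^{2/3})$ incidences inside a plane, thicken it into $\mathbb{E}^d$ using a pencil of blue hyperplanes through a common $(d-2)$-flat so that the planar points lift to vertices of the combined arrangement, and add blue hyperplanes to reach $n$ total. Each of the three terms $m^{2/3}k^{2/3}n^{(d-2)/3}$, $kn^{d-2}$, and $m$ is then realized by an appropriate choice of parameters, in the same spirit as the construction underlying Theorem \ref{mnBichromaticBound}.
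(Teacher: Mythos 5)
Your lower--bound sketch is in the right spirit, though the ``pencil of blue hyperplanes through a common $(d-2)$-flat'' is not quite the right structure: the paper uses $d-2$ \emph{families of parallel} blue hyperplanes (a grid $x_i=j$), so that each of the $\Theta((n-k)^{d-2})$ ways of picking one hyperplane per family cuts out a fresh translate of the planar Szemer\'edi--Trotter configuration; a single pencil replicates it only once and cannot produce the $n^{(d-2)/3}$ factor. That is fixable. The genuine gap is in the upper bound, which is the heart of the theorem and which your proposal does not actually prove: you correctly diagnose that the naive restrict-and-sum recursion fails because the traces of hyperplanes sharing a common $(d-2)$-flat collapse on restriction, but you then only gesture at a $(1/r)$-cutting strategy (``I expect the correct argument to route around this\dots''), with the two-parameter bookkeeping left entirely unexecuted. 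In particular, nowhere in your sketch does the hypothesis that the $m$ points are \emph{vertices of the arrangement} enter, yet without it the statement is false (put all points on a common $(d-2)$-flat), so any complete argument must use it at a specific step.

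The paper's mechanism is different from cuttings and is worth knowing because it is the whole proof. For each vertex $P$, intersect the hyperplanes through $P$ with a generic $2$-plane $M$, obtaining red and blue lines, no two parallel. Because $P$ is a vertex of the arrangement, these lines are \emph{not all concurrent}; hence either the red lines are concurrent and some blue line meets them all in distinct points, or the dual of the weak Dirac theorem (Lemma \ref{WeakDirac}) supplies a red line meeting a constant fraction $\diracC$ of the red lines in distinct points. The corresponding hyperplane therefore meets the red hyperplanes through $P$ in at least $\diracC\,|\redhypsinc{P}|$ \emph{distinct} $(d-2)$-flats, and $P$ is assigned to that one hyperplane. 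This is exactly the repair for the trace-collapse you identified: each vertex goes to a single hyperplane on which its red incidence count survives up to a constant, so $\sum_i m_i = m$ with no overcounting, the $(d-1)$-dimensional inductive hypothesis applies inside each hyperplane with $k'\le k$ and $n'\le n$, and Jensen's inequality $\sum_i m_i^{2/3}\le n^{1/3}\bigl(\sum_i m_i\bigr)^{2/3}$ supplies the one factor of $n^{1/3}$ per dimension that your accounting correctly predicted. Until you either carry out the cutting argument in full or adopt this assignment step, the upper bound remains unproved.
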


If $k=n$ (i.e., there are no blue hyperplanes), then this simplifies to $\Theta_d(m^{2/3}n^{d/3} + n^{d-1} + m)$.
Since the points are vertices of the arrangement, $m = O(n^d)$, so $m = O(m^{2/3}n^{d/3})$.
Substituting, we obtain $\Theta_d(m^{2/3}n^{d/3} + n^{d-1})$, the bound proved by Agarwal and Aronov (Lemma \ref{AgarwalAronov}).

We prove Theorem \ref{BichromaticAgarwalAronov} in two parts.
First, following Agarwal and Aronov \cite{AA92}, we show an upper bound on $\mathbf{I^d}(m,n,k)$, and then, to show this is the best possible, we modify Edelsbrunner's \cite[p113]{Ed87} construction that achieves the upper bound.

\subsection{Proof of the upper bound for Theorem \ref{BichromaticAgarwalAronov}}

\begin{lemma}\label{bichromaticUpperBound}If $d \geq 2$ and $m = \Omega(n^{d-2})$, then
\[\mathbf{I^d}(m,k,n) = O_d(m^{2/3}k^{2/3}n^{(d-2)/3} + kn^{d-2} + m).\]
\end{lemma}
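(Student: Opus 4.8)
The plan is to prove the upper bound by induction on the dimension $d$, reducing the $d$-dimensional incidence count to a collection of $(d-1)$-dimensional subproblems, and ultimately bottoming out at the planar Szemer\'edi--Trotter bound (Lemma \ref{SzemerediTrotter}). The base case $d=2$ is exactly a bichromatic version of Szemer\'edi--Trotter: here $k$ red ``hyperplanes'' are lines, the vertices $\verts$ are intersection points, and the claimed bound $O(m^{2/3}k^{2/3}n^{0}+kn^0+m) = O(m^{2/3}k^{2/3}+k+m)$ should follow from applying Szemer\'edi--Trotter to the $m$ vertices and the $k$ red lines (note that $n^{(d-2)/3}=n^0=1$ and $kn^{d-2}=k$ when $d=2$), since each counted incidence is between a red line and one of the $m$ points.

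For the inductive step, following Agarwal and Aronov, I would fix the arrangement of $k$ red and $n-k$ blue hyperplanes in $\mathbb{E}^d$ and project the configuration onto each hyperplane in turn. Concretely, for each red hyperplane $H \in \redhyps$, the other hyperplanes cut $H$ in a $(d-1)$-dimensional arrangement of $n-1$ hyperplanes-within-$H$, and the vertices of $\verts$ lying on $H$ are vertices of this induced $(d-1)$-dimensional arrangement. The incidences we are counting — red hyperplane against vertex — become, within each $H$, incidences between the induced lower-dimensional flats and the points of $\verts$ on $H$. The key bookkeeping is to sum the inductive bound $\mathbf{I^{d-1}}(m_H, k_H, n-1)$ over all red hyperplanes $H$, where $m_H$ is the number of vertices on $H$ and $k_H$ counts the induced red flats, while controlling $\sum_H m_H$ (which is the quantity we are bounding) and $\sum_H k_H$ via convexity/H\"older so that the sum of the three-term lower-dimensional bounds reassembles into the claimed $d$-dimensional expression. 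The term $kn^{d-2}$ and the threshold hypothesis $m=\Omega(n^{d-2})$ are precisely what is needed to absorb the ``degenerate'' contribution where a vertex lies on a $(d-2)$-flat contained in many hyperplanes.

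I would organize the induction so that the quantity actually bounded is $\sum_{H \in \redhyps} (\text{incidences on } H)$, and apply H\"older's inequality to the sum $\sum_H m_H^{2/3}$ against $\sum_H m_H = \mathbf{I^d}$, turning the recursion into an inequality of the form $\mathbf{I^d} \lesssim (\mathbf{I^d})^{2/3}(\cdots)^{1/3} + (\text{lower-order terms})$, which can then be solved for $\mathbf{I^d}$. The $m^{2/3}k^{2/3}n^{(d-2)/3}$ main term should emerge from the $m^{2/3}$-type term of the $(d-1)$-dimensional bound after this H\"older step, the $kn^{d-2}$ term from the $kn^{d-3}$ term summed over the relevant flats, and the linear $m$ term carries through directly. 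The decomposition into ``non-degenerate'' vertices (those where the incident hyperplanes locally span) versus degenerate ones, handled by Lemma \ref{mnBound} / Theorem \ref{mnBichromaticBound} below the threshold $m=O(n^{d-2})$, lets me cleanly split off the $kn^{d-2}$ contribution.

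The main obstacle I expect is the combinatorial bookkeeping in the inductive step: one must verify that projecting onto each red hyperplane genuinely preserves the ``spanned vertex'' structure so that the induced configuration satisfies the hypotheses of the $(d-1)$-dimensional statement (in particular that induced flats remain distinct and that vertices remain vertices of the induced arrangement), and one must track the red/blue coloring correctly through the projection so that only red-hyperplane incidences are counted and the parameter $k$ propagates to $k_H$ in a way compatible with H\"older. Getting the exponents to close — ensuring the reassembled three terms match $m^{2/3}k^{2/3}n^{(d-2)/3} + kn^{d-2} + m$ rather than something weaker — is the delicate part, and the hypothesis $m=\Omega(n^{d-2})$ must be invoked at exactly the right place to discard or dominate the low-order cross terms generated by the recursion.
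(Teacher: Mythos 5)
Your overall framework---induction on dimension with base case Szemer\'edi--Trotter, restriction of the arrangement to hyperplanes, and a convexity step to reassemble the three terms---is the right skeleton, and your base case is correct. But the specific decomposition you propose has a gap that breaks the exponents. You restrict to \emph{every} red hyperplane $H$ and sum the inductive bound over all of them, with $\sum_H m_H$ equal to the very incidence count $I$ you are bounding (each vertex is counted once for each red hyperplane through it). H\"older then gives $\sum_H m_H^{2/3} \leq k^{1/3} I^{2/3}$, so the main term of the recursion becomes $I \lesssim I^{2/3}\, k\, n^{(d-3)/3} + \cdots$, which solves to $I \lesssim k^3 n^{d-3}$. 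That is strictly weaker than the claimed $m^{2/3}k^{2/3}n^{(d-2)/3} + kn^{d-2} + m$ (e.g.\ for $k=n$, $m=n^{d-2}$ it gives $n^d$ versus the true $n^{d-1}$). A second, independent problem is that your reduction needs $m_H$ to be controlled by the number of incidences between $\verts\cap H$ and the \emph{red} induced $(d-2)$-flats in $H$; this fails for a vertex of $H$ all of whose other incident hyperplanes are blue, since such a vertex lies on no red flat inside $H$ and contributes nothing to the lower-dimensional count.

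The missing idea, which is the heart of the paper's proof, is to assign each vertex $P$ to exactly \emph{one} hyperplane through it (possibly a blue one), chosen so that the restriction to that hyperplane preserves a constant fraction of $P$'s red degree. This is done by projecting the hyperplanes through $P$ onto a generic $2$-plane and invoking the dual weak Dirac theorem (Lemma \ref{WeakDirac}): either some blue line meets all the concurrent red lines in distinct points, or some red line meets $\Omega(|\redlines{P}|)$ of the others in distinct points; the corresponding hyperplane $\hyp{i}{P}$ then meets the red hyperplanes through $P$ in $\Omega(|\redhypsinc{P}|)$ distinct $(d-2)$-flats. Because each vertex is assigned exactly once, the masses satisfy $\sum_{i=1}^{n} m_i = m$ (not $I$), and Jensen over the $n$ hyperplanes gives $\sum_i m_i^{2/3} \leq n^{1/3}m^{2/3}$, which is exactly what turns the inductive main term $m_i^{2/3}k^{2/3}n^{(d-3)/3}$ into $m^{2/3}k^{2/3}n^{(d-2)/3}$ without any self-referential recursion to solve. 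Without this single-assignment device both of the defects above disappear, so you should replace the ``sum over all red hyperplanes'' bookkeeping with it.
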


\begin{proof}
In $\mathbb{E}^d$, let $\redhyps$ be a set of $k$ red hyperplanes, and let $\bluehyps$ be a set of $n-k$ blue hyperplanes such that $\redhyps \cap \bluehyps = \emptyset$.
Let $\hyps$ be their combined arrangement.
Let $\verts$ be a subset of the vertices of $\hyps$, each incident to at least one red hyperplane, with $ \left|\verts\right| = m$.

Lemma \ref{SzemerediTrotter} (the Szemeredi-Trotter incidence bound)
does not require the points to be vertices of the arrangement of lines,
so the theorem is true for $\mathbb{E}^2$.
This will be the base case for induction.

Let $P$ be an arbitrary point of $\verts$, and let 
$\hypspoint{P} = \{\hyp{1}{P},\hyp{2}{P},\ldots,\hyp{t}{P}\} \subseteq \hyps$ 
be the set of hyperplanes that contain $P$. 
Let $\redhypsinc{P}$ be the set of red hyperplanes that contain $P$.

Place a two dimensional plane $M$ in general position, so that the hyperplanes of $\hypspoint{P}$ intersect $M$ at lines 
$\lines{P} = \{\linep{1}{P},\linep{2}{P},\ldots,\linep{t}{P}\}$.
We can place $M$ so that no two members of $\lines{P}$ are parallel.
Let each $\linep{i}{P}$ correspond to hyperplane $\hyp{i}{P}$ and have the same color as $\hyp{i}{P}$.
Let $\redlines{P}\subseteq\lines{P}$ be the red lines, and let $\bluelines{P}\subseteq\lines{P}$ be the blue lines.

Since $P$ is a vertex spanned by the hyperplanes of $\hypspoint{P}$, 
the lines of $\lines{P}$ are not all concurrent; however, the red lines of $\redlines{P}$ might be.
If the red lines are concurrent, then there must exist a blue line $\linep{i}{P} \in \bluelines{P}$ that intersects all of the red lines at distinct points.
Otherwise, by the dual weak Dirac there exists a red line $\linep{i}{P} \in \redlines{P}$ and a constant \diracC, 
such that $\linep{i}{P}$ intersects at least $\diracC|\redlines{P}|$ other red lines at distinct points.
In either case, there exists a line $\linep{i}{P}$ on $M$ that intersects the lines of $\redlines{P}$ in at least $\diracC|\redlines{P}|$ distinct points,
and so the hyperplane $\hyp{i}{P}$ corresponding to $\linep{i}{P}$ intersects the hyperplanes of $\redhypsinc{P}$ in at least $\diracC|\redlines{P}|=\diracC|\redhypsinc{P}|$ distinct $(d-2)$-flats.
Since $P$ is spanned by $\hyps$, it is also spanned by the intersection of $(d-2)$-flats in $\hyp{i}{P}$.

Assign $P$ to $\hyp{i}{P}$, i.e., let $f(P)=\hyp{i}{P}$.
The process of assignment may be repeated for each point in $\verts$, assigning them to hyperplanes in $\hyps$.

Select an arbitrary hyperplane $\hypnum{i}$ of $\hyps$.
Let $\vassigned{i}$ be the set vertices assigned to hyperplane $\hypnum{i}$. 
That is, $\vassigned{i} = \{P \in \verts: f(P) = \hypnum{i}\}$.
Let $m_i = |\vassigned{i}|$.
Let $\redhypsind{i}$ be the set of $(d-2)$-flats formed by the intersection of the hyperplanes of $\redhyps$ with $\hypnum{i}$, and let $\bluehypsind{i}$ be the $(d-2)$-flats formed by the intersection of the hyperplanes of $\bluehyps$ with $\hypnum{i}$.
Let $\khypsind{i} = |\redhypsind{i}|$, and let $\nhypsind{i} = |\redhypsind{i} \cup \bluehypsind{i}|$.

We now have an arrangement, $\redhypsind{i} \cup \bluehypsind{i}$, of $(d-2)$-flats contained in $(d-1)$-dimensional space, $\hypnum{i}$,
that determine a set of points, $\vassigned{i}$.
Note that for a particular $P \in \vassigned{i}$, the number of incidences between $P$ and the $(d-2)$-flats of $\redhypsind{i}$
is at least $\diracC$ times the number of incidences it has with the hyperplanes $\redhyps$.
We apply the inductive hypothesis to determine an upper bound on \incds{\vassigned{i}}{\redhyps},
\begin{equation}
\label{eqn:baa-ind}
\begin{split}
\incds{\vassigned{i}}{\redhyps}& \leqslant
\frac{1}{\diracC}\incds{\vassigned{i}}{\redhypsind{i}}\\
& = \frac{1}{\diracC} O_{d-1}\left(m_i^{2/3}\khypsind{i}^{2/3}\nhypsind{i}^{(d-3)/3} + \khypsind{i}\nhypsind{i}^{d-3} + m_i\right)\\
& = O_d\left(m_i^{2/3}\khypsind{i}^{2/3}\nhypsind{i}^{(d-3)/3} + \khypsind{i}\nhypsind{i}^{d-3} + m_i\right)\\
& = O_d\left(m_i^{2/3}k^{2/3}n^{(d-3)/3} + kn^{d-3} + m_i\right).
\end{split}
\end{equation}

The same logic may be applied to the remaining $n-1$ hyperplanes of $\hyps$.
Since each point $P \in \verts$ is assigned to exactly one \vassigned{i},
\[
\incds{\verts}{\redhyps} = \sum_{i=1}^n\incds{\vassigned{i}}{\redhyps}.
\]

From the upper bound on  $\incds{\vassigned{i}}{\redhyps}$ (inequality \ref{eqn:baa-ind}), it follows that 
\[
\incds{\verts}{\redhyps} = O_d\left(\sum_{i=0}^n\left(m_i^{2/3}k^{2/3}n^{(d-3)/3} + kn^{d-3}+m_i\right)\right).
\]

Since each point $P \in \verts$ is in exactly one of the sets $\vassigned{i}$,
we see that $m = \sum_{i=1}^n m_i$.
Since $\varphi(x)=x^{2/3}$ is concave, Jensen's inequality implies
 $\sum_{i=1}^n m_i^{2/3} \leq n^{1/3}(\sum_{i=1}^n m_i)^{2/3}$,
and thus,
\[
\mathbf{I^d}(m,k,n) = O_d\left(m^{2/3}k^{2/3}n^{(d-2)/3} + kn^{d-2} + m\right).
\]
\end{proof}

\subsection{Proof of the lower bound for Theorem \ref{BichromaticAgarwalAronov}}
The upper bound has three terms.
If $m = \Omega(k^2n^{d-2})$, then the $O_d(m)$ term dominates.
If $m = O(k^{1/2}n^{d-2})$, then the $O_d(kn^{d-2})$ term dominates.
Otherwise, the $O_d(m^{2/3}k^{2/3}n^{(d-2)/3})$ term dominates.

We may very easily achieve $1$ incidence for each point, so
\[\mathbf{I^d}(m,k,n) = \Omega(m).\]
Since $\mathbf{I^d}(m,k,n)$ increases monotonically with $m$, Theorem \ref{mnBichromaticBound} immediately implies that if $m = \Omega(n^{d-2})$, then
\[\mathbf{I^d}(m,k,n) = \Omega_d(kn^{d-2}).\]

We will modify a construction presented by Edelsbrunner \cite[p. 112]{Ed87} to demonstrate the third term in the lower bound.
We need the following lower bound  in two dimensions, customarily attributed to Paul Erd\H{o}s: 
\begin{lemma}[Edelsbrunner, \cite{Ed87}]\label{E2Construction}
\[\mathbf{I^2}(m,n) = \Omega(m^{2/3}n^{2/3}).\]
\end{lemma}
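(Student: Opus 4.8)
The plan is to exhibit the classical grid construction of Erd\H{o}s and verify that it is tight. First I would introduce two integer scale parameters $a \leq b$, take the point set to be the grid $\{1,\ldots,a\} \times \{1,\ldots,b\}$, and take as the line set all lines $y = sx + t$ with integer slope $s \in \{1,\ldots,s_0\}$ and intercept $t \in \{1,\ldots,t_0\}$. Choosing $t_0 = \lfloor b/2 \rfloor$ and $s_0 = \lfloor b/(2a) \rfloor$ guarantees that $sx + t \leq s_0 a + t_0 \leq b$ for every $x \leq a$, so each line stays inside the vertical extent of the grid and therefore meets it in exactly $a$ grid points. This produces $|L| = s_0 t_0$ lines, a point set of size $ab$, and $a\,|L|$ point-line incidences.

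Second, I would match the parameters to the prescribed targets $m$ and $n$. Writing $m = ab$ and $n = s_0 t_0 = \Theta(b^2/a)$, one finds $mn = \Theta(b^3)$ and $m/n = \Theta(a^2/b)$, so the choices $b = \Theta((mn)^{1/3})$ and $a = \Theta((m^2/n)^{1/3})$ solve the system. Substituting back, the number of incidences is $a\,|L| = an = \Theta(b^2) = \Theta((mn)^{2/3}) = \Theta(m^{2/3}n^{2/3})$, which is exactly the claimed lower bound. Rounding $a,b,s_0,t_0$ to nearby positive integers alters each quantity by at most a constant factor and so preserves the asymptotics.

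Third, I would attend to two technical points. The parameters $a$ and $s_0$ must be at least $1$, which holds precisely in the central regime $n^{1/2} \lesssim m \lesssim n^2$; outside this regime the quantity $m^{2/3}n^{2/3}$ is at most $m + n$, and a trivial configuration (a single line through $\Theta(m)$ of the vertices, or $\Theta(n)$ concurrent lines) already yields $\Omega(m^{2/3}n^{2/3})$ incidences, so the grid need only be built in the central regime. The second point is essential to the definition of $\mathbf{I^2}(m,n)$: the $m$ counted points must be \emph{vertices} of the arrangement, i.e. each must lie on at least two lines. A grid point $(x,y)$ lies on the line of slope $s$ iff $y - sx \in \{1,\ldots,t_0\}$, and for points in the bulk of the grid many admissible slopes satisfy this, so a constant fraction of the $ab$ grid points are incident to at least two lines; restricting to these multiply-covered points loses only a constant factor in both $m$ and the incidence count.

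The main obstacle is the bookkeeping in this last step: confirming that, uniformly across the admissible range of $m$ and $n$, the rounded integer parameters simultaneously keep every line inside the grid, keep the slope and intercept ranges nonempty, and leave a constant fraction of the counted grid points as genuine vertices. Once these range constraints are checked, the counting in the first two paragraphs is routine and delivers $\mathbf{I^2}(m,n) = \Omega(m^{2/3}n^{2/3})$.
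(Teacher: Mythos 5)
Your proposal is correct and is essentially the approach the paper relies on: the paper does not prove this lemma itself but cites Edelsbrunner, whose lower bound is exactly the Erd\H{o}s integer-grid construction you describe (an $a\times b$ grid of points and lines of bounded integer slope and intercept, with $a,b$ tuned so that $ab=\Theta(m)$ and the number of lines is $\Theta(n)$). The remaining issues you flag -- the extreme regimes $m\lesssim n^{1/2}$ and $m\gtrsim n^2$, and checking that a constant fraction of the grid points are genuine vertices of the arrangement as the definition of $\mathbf{I^2}(m,n)$ requires -- are indeed just bookkeeping and go through.
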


\begin{lemma}
\[\mathbf{I^d}(m,k,n) = \Omega_d(m^{2/3}k^{2/3}n^{(d-2)/3}).\]
\end{lemma}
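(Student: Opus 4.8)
The plan is to build a single explicit configuration by taking a product of the planar Erd\H{o}s construction of Lemma \ref{E2Construction} with a $(d-2)$-dimensional grid of axis-parallel hyperplanes. Before doing so, I would dispose of two easy reductions. First, since recoloring a blue hyperplane red can only create red incidences, $\mathbf{I^d}(m,k,n)$ is nondecreasing in $k$; hence for $k \geq n/2$ it suffices to treat $k' = \lfloor n/2 \rfloor$, for which the target term is $\Theta_d(m^{2/3}n^{d/3})$, so I may assume $k \leq n/2$ and in particular $n-k = \Theta(n)$. Second, I only need the claimed bound in the regime $k^{1/2}n^{d-2} \lesssim m \lesssim k^2 n^{d-2}$ where this term dominates the upper bound; outside it the other two terms have already been matched above.

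\textbf{The construction.} Work in coordinates $(x_1,\dots,x_d)$ and regard the last two coordinates as a copy of $\mathbb{E}^2$. Fix a planar configuration of $n_2$ lines and $m_2$ vertices realizing $\Omega(m_2^{2/3}n_2^{2/3})$ incidences (Lemma \ref{E2Construction}); since these points are vertices of a line arrangement, each lies on at least two of the lines. Lift each planar line $\ell$ to the red hyperplane $H_\ell = \{x \in \mathbb{E}^d : (x_{d-1},x_d) \in \ell\}$, and for each $j \in \{1,\dots,d-2\}$ introduce $q$ blue grid hyperplanes of the form $x_j = c$. For every planar vertex $p$ and every grid point $g=(g_1,\dots,g_{d-2})$ (one chosen value in each of the $d-2$ coordinate directions), the point $(g,p)$ is a vertex of the combined arrangement: the $d-2$ grid hyperplanes through it fix $x_1,\dots,x_{d-2}$, and any two red hyperplanes through it (coming from two planar lines through $p$) fix $x_{d-1},x_d$, so these $d$ hyperplanes meet in exactly this point and do not lie on a common $(d-2)$-flat. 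Moreover the number of red hyperplanes through $(g,p)$ equals the number of planar lines through $p$.

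\textbf{Counting and parameters.} Taking all $q^{d-2}$ grid points yields $m_2 q^{d-2}$ chosen vertices, and the number of red-hyperplane incidences is $q^{d-2}$ times the planar incidence count, i.e. $\Omega(q^{d-2} m_2^{2/3} n_2^{2/3})$. I would set $n_2 = k$ and $q = \lfloor (n-k)/(d-2) \rfloor = \Theta_d(n)$, so that the arrangement has $k$ red and at most $n-k$ blue hyperplanes (padding with generic blue hyperplanes to reach exactly $n-k$ if needed), and then choose $m_2 = \Theta_d(m/n^{d-2})$ with $m_2 q^{d-2} \leq m$. Substituting gives $q^{d-2} m_2^{2/3} n_2^{2/3} = \Theta_d\!\big(n^{d-2}(m/n^{d-2})^{2/3} k^{2/3}\big) = \Theta_d(m^{2/3}k^{2/3}n^{(d-2)/3})$, where monotonicity of $\mathbf{I^d}(m,k,n)$ in $m$ lets me pass from the $m_2 q^{d-2}$ vertices actually produced up to the target $m$.

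\textbf{Main obstacle.} The delicate point is feasibility of the parameters rather than the algebra. I must verify that $n_2 = k$ and $m_2 = \Theta_d(m/n^{d-2})$ fall inside the range $\sqrt{n_2} \lesssim m_2 \lesssim n_2^2$ for which the planar construction of Lemma \ref{E2Construction} actually attains $\Omega(m_2^{2/3}n_2^{2/3})$; this is exactly what the assumed regime $k^{1/2}n^{d-2} \lesssim m \lesssim k^2 n^{d-2}$ guarantees, since there $m/n^{d-2} \in [k^{1/2}, k^2]$. The remaining care is routine: checking that the lifted points are genuine vertices (established above via the independence of the grid and line conditions), that the hyperplanes are distinct, and that the floor functions and the $\Theta_d$ constants can be absorbed, all of which are harmless given $m = \Omega(n^{d-2})$.
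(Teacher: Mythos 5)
Your proposal is correct and is essentially the paper's own construction: the same product of the planar Erd\H{o}s configuration (lifted to red hyperplanes normal to the $(x_{d-1},x_d)$-plane) with $d-2$ families of parallel blue grid hyperplanes, the same parameter choices $n_2=k$ and $m_2=\Theta_d(m/n^{d-2})$, and the same reductions to the regime where this term dominates. Your added checks (that the lifted points are genuine vertices, and that $m_2$ lies in the feasible range for the planar construction) only make explicit what the paper leaves implicit.
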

\begin{proof}
If $m = \Omega(k^2n^{d-2})$, this follows immediately from the fact that $\mathbf{I^d}(m,k,n) = \Omega(m)$, so we will assume that $m = O(k^2n^{d-2})$.
If $m = O(n^{d-2})$, this follows immediately from Theorem \ref{mnBichromaticBound}, so we will assume that $m = \Omega(n^{d-2})$.
If $k = \Omega(n)$, the conclusion follows immediately from the lower bound established by Edelsbrunner, so we will assume that $n-k = \Omega(n)$.

We will construct a set of hyperplanes $\mathcal{H}$ from the disjoint union of sets $\redhyps, \mathcal{H}_1, ..., \mathcal{H}_{d-2}$, to be defined later.
The set of vertices of $\mathcal{H}$ is $\mathcal{P}$, and we will count the incidences between $\mathcal{P}$ and $\redhyps$.

We will use a coordinate system $(x_1, x_2, ..., x_d)$.

Define
\[p = c_0\left\lfloor m/n^{d-2} \right\rfloor,\]
with $c_0$ to be set later.
Since $m = O(k^2n^{d-2})$, we know that 
$p \leq c_1k^2$, for some $c_1$ depending on $c_0$.

The set $\redhyps$ contains $k$ red hyperplanes normal to a plane $\pi$, with $\pi$ defined as
\[\pi: x_1 = x_2 = ... = x_{d-2}=0.\]
The intersection of the arrangement $\mathcal{H}_R$ with $\pi$ is a set of lines $\mathcal{L}_R$ intersecting in a set of points $\mathcal{P}_R$ with $|\mathcal{P}_R| = p$ and $\mathbf{I^2}(\mathcal{L}_R, \mathcal{P}_R) = \Omega(p^{2/3}k^{2/3})$.
Since $p \leq c_1k^2$, Lemma \ref{E2Construction} implies that there exists a constant $c_0$ such that this arrangement is guaranteed to exist; choose $c_0$ accordingly.
Since the hyperplanes of $\mathcal{H}_R$ are normal to $\pi$, the intersection of $\mathcal{H}_R$ with any plane parallel to $\pi$ will be combinatorially equivalent to the arrangement of $\mathcal{L}_R$.

The sets $\mathcal{H}_1, ..., \mathcal{H}_{d-2}$ each contain $\lfloor (n-k)/(d-2) \rfloor$ parallel blue hyperplanes.
Place the sets of hyperplanes so that any $d-2$ hyperplanes, one from each set $\mathcal{H}_1, ..., \mathcal{H}_{d-2}$, intersect in a unique common plane parallel to $\pi$.
Formally, we can define
\[\mathcal{H}_i = \{x_i = j | j=0,1,...,\lfloor (n-k)/(d-2) \rfloor - 1\}\]
for $1 \leq i \leq d-2$.
Since there are $\lfloor (n-k)/(d-2) \rfloor^{d-2}$ ways to choose one hyperplane from each set $\mathcal{H}_1,...,\mathcal{H}_{d-2}$, and each plane forming their intersection accounts for $\Omega(p^{2/3}k^{2/3})$ intersections between vertices of the overall arrangement and the red hyperplanes of $\mathcal{H}_0$, we can conclude that
\[\mathbf{I^d}(\mathcal{P}, \mathcal{H}_R, \mathcal{H}_1 \cup \mathcal{H}_2 \cup ... \cup \mathcal{H}_{d-2}) = \Omega(p^{2/3}k^{2/3}(n-k)^{d-2}) = \Omega(p^{2/3}k^{2/3}n^{d-2}).\]
Here, $|\mathcal{P}| = p\lfloor(n-k)/(d-2)\rfloor^{d-2} = \Theta(m)$.
Since $p^{2/3}k^{2/3}n^{d-2} = \Theta(m^{2/3}k^{2/3}n^{(d-2)/3})$, this proves the theorem.
\end{proof}

Together, the above upper and lower bounds imply Theorem \ref{BichromaticAgarwalAronov}.

\section{Three dimensional analog of the Beck-Erd\H{o}s theorem}
\label{sec:Beck-ErdosR3}

In this section, we prove the following three dimensional analog of the Beck-Erd\H{o}s theorem for points and lines.

\begin{theorem}\label{BeckErdos3}
A set of $n$ points in $\mathbb{E}^3$, of which no more than $n-k$ are on any plane or on any pair of skew lines, spans $\Omega (nk^2)$ planes.
\end{theorem}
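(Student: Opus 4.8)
The plan is to bound the number $m$ of spanned planes from below by producing many incidences between a carefully chosen point set and the spanned planes, and then inverting the incidence bound of Theorem~\ref{BichromaticAgarwalAronov}. Concretely, suppose we can find a set $R$ of $r = \Theta(k)$ points, each incident to $\Omega(nk)$ spanned planes, so that the number of point--plane incidences between $R$ and the $m$ spanned planes is $\Omega(nk^2)$. Applying the $d=3$ case of Lemma~\ref{bichromaticUpperBound} (in the point--hyperplane form stated in the introduction, valid since we may assume $m=\Omega(n)$) bounds this count by $O(m^{2/3}r^{2/3}n^{1/3} + rn + m)$ with $r = O(k)$. Checking the three terms in turn -- the first forces $m = \Omega(nk^2)$, the third gives $m = \Omega(nk^2)$ directly, and the middle term $O(kn)$ can dominate $\Omega(nk^2)$ only when $k = O(1)$ -- shows that $\Omega(nk^2)$ incidences imply $m = \Omega(nk^2)$. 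Thus the theorem reduces to producing $\Theta(k)$ points that each lie on $\Omega(nk)$ spanned planes.

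To count planes through a single point $p$, I would centrally project the other $n-1$ points from $p$ onto a generic plane, obtaining a planar point set $S_p$; two points merge exactly when they are collinear with $p$. Every spanned plane through $p$ then corresponds to a line spanned by $S_p$, so the number of such planes is at least the number of lines spanned by $S_p$, which I would bound from below by the planar Beck--Erd\H{o}s theorem (Lemma~\ref{BeckErdos}). The hypothesis that no plane carries more than $n-k$ points means no more than $n-k$ of the lines through $p$ are coplanar, i.e.\ at most $n-k$ points of $S_p$ are collinear; Lemma~\ref{BeckErdos} then yields $\Omega\!\left(|S_p|\,(|S_p|-(n-k))\right)$ lines, which is $\Omega(nk)$ as soon as $|S_p|\ge n-\tfrac12 k$ (for $k\le n/2$). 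Writing $|S_p| = n-1-\mathrm{Excess}(p)$ with $\mathrm{Excess}(p)=\sum_{\ell\ni p}(|\ell|-2)$ summed over the lines through $p$, a point $p$ is \emph{good} -- on $\Omega(nk)$ planes -- precisely when $\mathrm{Excess}(p)=O(k)$ with a small enough constant.

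The heart of the argument, and the step I expect to be hardest, is to show that at least $\Omega(k)$ points are good. Naive averaging fails, since $\sum_p\mathrm{Excess}(p)=\sum_\ell |\ell|(|\ell|-2)$ can be as large as $\Theta(n^2)$, so most points may have large excess; indeed in the extremal configuration -- $n-k$ points on a line together with $k$ points in general position -- exactly the $k$ off-line points are good. A point has large excess only if it lies on a line carrying many points, so I would threshold the lines into \emph{rich} ones (more than $\alpha k$ points, for a small constant $\alpha$) and the rest, and argue that the rich lines cover at most $n-\Omega(k)$ points; the $\Omega(k)$ points lying off every rich line are the candidates, and a constant fraction of them can then be shown to have total excess $O(k)$. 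This is exactly where \emph{both} hypotheses are needed, and where the skew-lines condition is indispensable: any two rich lines either span a plane or form a skew pair, and in either case the plane hypothesis or the skew-lines hypothesis caps the points they jointly cover at $n-k$, preventing the mass of the point set from concentrating onto a few rich lines or onto a pair of skew lines (the configuration that spans only $O(n)$ planes). Converting this \emph{pairwise} control into a bound on the \emph{total} coverage by all rich lines -- for instance by using Lemma~\ref{SzemerediTrotter} to bound the number of rich lines and combining this with the pairwise caps -- is the main technical obstacle.

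Finally I would dispose of the extreme ranges of $k$ separately. For $k=O(1)$ the claim $\Omega(nk^2)=\Omega(n)$ follows from the non-coplanarity of the points by an elementary projection argument, or from the weak Dirac theorem (Lemma~\ref{WeakDirac}) applied in a projection. For $k=\Omega(n)$, where $n-k$ is small, the good-point count is at its cleanest and the target $\Omega(nk^2)=\Omega(n^3)$ is essentially Beck's three-dimensional theorem \cite{Be83}; here one may also invoke Lemma~\ref{TwoExtremesInE2} inside the projections $S_p$. The intermediate range $1\ll k\le n/2$ is the one handled by the projection-plus-selection argument above.
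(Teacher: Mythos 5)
Your outer framework matches the paper's: Lemma \ref{bichromaticConsequence} is exactly the inversion of the $d=3$ bichromatic bound that you describe, and the device of projecting from a candidate point and invoking the planar Beck--Erd\H{o}s theorem (Lemma \ref{BeckErdos}) is how the paper counts planes through a point in two of its cases. But the step you flag as ``the main technical obstacle'' --- actually producing $\Omega(k)$ points that each lie on $\Omega(nk)$ spanned planes --- is the entire substance of the theorem, and your sketch of it (threshold lines into rich and poor, bound the coverage of the rich lines via Szemer\'edi--Trotter plus pairwise caps, then select low-excess points) is neither carried out nor, as framed, workable. The criterion ``$\mathrm{Excess}(p)=O(k)$ with a small constant'' is not the right one: in the configuration of one line with $n-2k$ points plus two further mutually skew lines with $k$ points each, every point has excess $\Omega(k)$ (and $n-2k$ of them have excess $\Omega(n)$), yet the theorem holds because the largest collinear subset of the projected set $S_p$ is much smaller than $n-k$ --- information your excess bookkeeping discards. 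More importantly, the hard configurations are precisely those in which almost all points lie on one dominant line, one dominant plane, or two long skew lines, and no uniform ``keep the low-excess points'' rule identifies the right $\Omega(k)$ points in all three situations.

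The paper avoids a global excess argument entirely. It fixes the extremal cover $\mathcal{H}$ (the plane or pair of lines containing the most points, so $|\mathcal{H}|=n-x$ with $x\ge k$) and splits into four cases according to its structure. If no plane holds $\beta_3 n$ points, Beck's Lemma \ref{TwoExtremes} already gives $\Omega(n^3)$ planes. If $\mathcal{H}$ contains a line $\mathcal{L}$ carrying all but $O(x)$ of its points, Lemma \ref{longLineLemma} shows that no more than that many points of $\mathcal{X}=\mathcal{T}\setminus\mathcal{H}$ are collinear or coplanar with $\mathcal{L}$; hence $\mathcal{X}$ spans $\Omega(x^2)$ lines missing $\mathcal{L}$, the pigeonhole Corollary \ref{pigeonholeFlats} yields $\Omega(x)$ points of $\mathcal{X}$ each on $\Omega(x)$ such lines, and projecting from such a point one sees $\Omega(n)$ points of $\mathcal{L}$ together with $\Omega(x)$ additional directions, giving $\Omega(nx)$ planes through it by Lemma \ref{BeckErdos}. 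If instead $\mathcal{H}$ is a plane with no dominant line, or a pair of long skew lines, then $\mathcal{H}$ itself spans $\Omega(nx)$ lines (by Lemma \ref{BeckErdos}, or by counting crossing pairs), and these supply $\Omega(nx)$ planes through every point of $\mathcal{X}$. In each case one gets $\Omega(nx^2)=\Omega(nk^2)$ incidences and concludes via Lemma \ref{bichromaticConsequence}. Replacing your rich-line selection by this case analysis on the extremal cover is the missing idea.
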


\subsection{Planes from incidences}
Our basic strategy is to find $\Omega(nk^2)$ point-hyperplane incidences, and then to apply the following corollary of Theorem \ref{BichromaticAgarwalAronov} to show that the lower bound on point-hyperplane incidences implies a lower bound on hyperplanes.
\begin{lemma}\label{bichromaticConsequence}
Let $\mathcal{P}$ be a set of $n$ points in $\mathbb{E}^d$, and let $\mathcal{K} \subseteq \mathcal{P}$ with $|\mathcal{K}|=k$.
If there are $\Omega(n^{d-2}k^2)$ incidences between points of $\mathcal{K}$ and hyperplanes spanned by $\mathcal{P}$, then there are $\Omega(n^{d-2}k^2)$ hyperplanes spanned by $\mathcal{P}$.
\end{lemma}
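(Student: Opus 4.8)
The plan is to read off the number of spanned hyperplanes directly from the bichromatic incidence bound. Write $m$ for the number of hyperplanes spanned by $\mathcal{P}$ and $I$ for the number of incidences between $\mathcal{K}$ and these hyperplanes, so that the hypothesis reads $I = \Omega(n^{d-2}k^2)$. By point-hyperplane duality, the count $I$ is governed by the primal form of Theorem \ref{BichromaticAgarwalAronov}: provided $m = \Omega(n^{d-2})$,
\[
I = O_d\!\left(m^{2/3}k^{2/3}n^{(d-2)/3} + kn^{d-2} + m\right).
\]
The idea is to combine this upper bound on $I$ with the hypothesized lower bound on $I$ and solve the resulting inequality for $m$.

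First I would check that the bound above is actually applicable, i.e. that $m = \Omega(n^{d-2})$. This follows from a trivial observation: each spanned hyperplane is incident to at most $k$ points of $\mathcal{K}$, so $m \ge I/k = \Omega(n^{d-2}k)$, and in particular $m = \Omega(n^{d-2})$ (we may assume $k \ge 1$, as otherwise the hypothesis is vacuous). This bootstrapping step is what lets us invoke the bichromatic bound at all, and, as noted below, it also rescues one of the degenerate cases.

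Next I would compare $I = \Omega(n^{d-2}k^2)$ with the three-term upper bound: at least one of the three terms must be $\Omega(n^{d-2}k^2)$. If the term $m$ dominates, the claim is immediate. If the term $m^{2/3}k^{2/3}n^{(d-2)/3}$ dominates, then solving $m^{2/3}k^{2/3}n^{(d-2)/3} = \Omega(n^{d-2}k^2)$ for $m$ gives exactly $m = \Omega(n^{d-2}k^2)$. The only genuinely delicate case is the middle term: if $kn^{d-2} = \Omega(n^{d-2}k^2)$, this forces $k = O(1)$, and then the trivial bound from the previous paragraph already yields $m = \Omega(n^{d-2}k) = \Omega(n^{d-2}k^2)$ since $k$ is bounded. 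Thus in every case $m = \Omega(n^{d-2}k^2)$.

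I expect the main obstacle to be precisely the middle term $kn^{d-2}$, which a priori only gives $m = \Omega(n^{d-2}k)$ and so falls a factor of $k$ short of the claim. The resolution is the observation that this regime can only occur when $k = O(1)$, where the shortfall is harmless and the trivial incidence-to-hyperplane bound closes the gap. A secondary point requiring care is justifying the passage to the primal incidence bound via duality and confirming that the side condition $m = \Omega(n^{d-2})$ demanded by Theorem \ref{BichromaticAgarwalAronov} is met; both are settled by the bootstrapping step.
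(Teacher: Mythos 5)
Your proposal is correct and follows essentially the same route as the paper: compare the hypothesized lower bound on incidences against the three-term dual of Theorem \ref{BichromaticAgarwalAronov} and case-split on which term dominates, with the middle term $kn^{d-2}$ forcing $k=O(1)$ and being handled separately. Your bootstrapping step $m \ge I/k$ is a small refinement --- it explicitly verifies the side condition $m=\Omega(n^{d-2})$, which the paper's proof leaves implicit, and it replaces the paper's pigeonhole argument in the $k=O(1)$ case --- but the overall argument is the same.
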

\begin{proof}
By the dual of Theorem \ref{BichromaticAgarwalAronov}, there are $O(m^{2/3}n^{(d-2)/3}k^{2/3} + n^{d-2}k + m)$ incidences between $k$ points of $\mathcal{P}$ and the $m$ hyperplanes spanned by $\mathcal{P}$.
There are three cases to consider.

\it Case 1. \rm
\[n^{d-2}k^2 = O(n^{d-2}k).\]
This implies that $k = O(1)$.
Since $O(1)$ points have a total of $\Omega(n^{d-2})$ incidences, some of them must have $\Omega(n^{d-2})$ incidences, so there must be $\Omega(n^{d-2})$ hyperplanes.

\it Case 2. \rm
\[n^{d-2}k^2 = O(m).\]
It immediately follows that $m = \Omega(n^{d-2}k^2)$.

\it Case 3. \rm
\[n^{d-2}k^2 = O((m^{2/3}n^{(d-2)/3}k^{2/3}).\]
Solving for m gives $m=\Omega(n^{d-2}k^2)$.
\end{proof}

\subsection{Combinatorial preliminaries}
It will be helpful to establish two essentially combinatorial results (Corollary \ref{pigeonholeFlats} and Lemma \ref{longLineLemma}) for application in Case 2 of the proof of Theorem \ref{BeckErdos3} (below).
\begin{lemma}[Modified pigeonhole principle]\label{pigeonhole}
Let $0 < c \leq 1$. If $\lceil ck^a \rceil$ discrete objects are allocated to $k$ containers, none of which can contain more than $k^{a-1}$ objects, then at least $\lfloor ck^{a-1}/2 \rfloor$ of the objects must be in each of at least $ck/2$ containers.
\end{lemma}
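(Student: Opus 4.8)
The plan is to prove the modified pigeonhole principle by a straightforward counting argument that isolates the containers carrying a substantial load. We have $\lceil ck^a\rceil$ objects in $k$ containers, each container capacity-limited to $k^{a-1}$. I would call a container \emph{heavy} if it holds at least $\lfloor ck^{a-1}/2\rfloor$ objects and \emph{light} otherwise, and the goal is to show that the number of heavy containers is at least $ck/2$.

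First I would bound the total number of objects sitting in light containers. Each light container holds fewer than $\lfloor ck^{a-1}/2\rfloor \le ck^{a-1}/2$ objects, and there are at most $k$ light containers, so all light containers together hold strictly fewer than $k\cdot(ck^{a-1}/2)=ck^a/2$ objects. Next I would bound the contribution of the heavy containers: if there are $h$ heavy containers, each holds at most $k^{a-1}$ objects by the capacity hypothesis, so the heavy containers hold at most $h\,k^{a-1}$ objects in total. Adding the two estimates and using that every object lies in some container gives
\[
ck^a \le \lceil ck^a\rceil \le h\,k^{a-1} + \frac{ck^a}{2}.
\]
Rearranging yields $h\,k^{a-1}\ge ck^a/2$, hence $h\ge ck/2$, which is exactly the claimed lower bound on the number of heavy containers.

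The one place requiring care is the interplay between the floor and ceiling functions and the strict versus non-strict inequalities, since the statement asks for \emph{at least} $\lfloor ck^{a-1}/2\rfloor$ objects in \emph{at least} $ck/2$ containers. I would make sure the threshold defining ``heavy'' is $\lfloor ck^{a-1}/2\rfloor$ so that light containers hold at most $\lfloor ck^{a-1}/2\rfloor-1 < ck^{a-1}/2$ objects, which keeps the light-container total strictly below $ck^a/2$ and leaves room to absorb the ceiling. I expect this bookkeeping — rather than any conceptual difficulty — to be the main obstacle, as the argument is otherwise a clean double-counting of objects split across heavy and light containers.
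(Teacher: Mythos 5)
Your proof is correct and follows essentially the same route as the paper's: both partition the containers into heavy and light, bound the light containers' total contribution by $ck^a/2$ using the threshold $\lfloor ck^{a-1}/2\rfloor$, bound the heavy containers' contribution by $k^{a-1}$ each via the capacity hypothesis, and rearrange to get at least $ck/2$ heavy containers. The floor/ceiling bookkeeping you flag works out exactly as you anticipate, so there is nothing to add.
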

\begin{proof}
Partition the $k$ containers into two sets:
set $\mathcal{A}$ has the containers with at least $\lfloor ck^{a-1}/2 \rfloor$ objects; set $\mathcal{B}$ has the containers with fewer than $\lfloor ck^{a-1}/2 \rfloor$ objects.
Let $n_a$ be the number of objects in all the containers of $\mathcal{A}$, and let $n_b$ be the number of objects in all the containers of $\mathcal{B}$.

Since no container has more than $k^{a-1}$ objects,
\[n_a \leq |\mathcal{A}|k^{a-1}.\]
Since every member of $\mathcal{B}$ has fewer than $\lfloor ck^{a-1}/2 \rfloor$ objects,
\[\lfloor ck^{a-1}/2 \rfloor|\mathcal{B}| > n_b.\]
Clearly, $n_b = \lceil ck^a \rceil - n_a$ and $|\mathcal{B}| = k - |\mathcal{A}|$, so
\begin{eqnarray*}
\lfloor ck^{a-1}/2 \rfloor(k - |\mathcal{A}|) & > & \lceil ck^a \rceil - |\mathcal{A}|k^{a-1} \\
|\mathcal{A}|k^{a-1} > |\mathcal{A}|(k^{a-1} - \lfloor ck^{a-1}/2 \rfloor) & > & \lceil ck^a \rceil - k\lfloor ck^{a-1}/2 \rfloor \geq ck^a/2.
\end{eqnarray*}
Therefore,
\[ |\mathcal{A}|  >  ck/2.\]
\end{proof}

When we apply this lemma below, the ``objects" in the above lemma are incidences and the ``containers" are points.

\begin{corollary}\label{pigeonholeFlats}
Let $0 < c\leq 1$. If there are at least $ck^a$ incidences between a set of $d$-flats and a set of points, and if no point is incident to more than $k^{a-1}$ $d$-flats, then at least $ck/2$ points must each be incident to at least $\lfloor ck^{a-1}/2 \rfloor$ $d$-flats.
\end{corollary}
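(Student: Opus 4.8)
The plan is to read Corollary \ref{pigeonholeFlats} as a direct translation of Lemma \ref{pigeonhole}, with the incidences playing the role of the ``objects'' and the points playing the role of the ``containers'', exactly as anticipated in the remark preceding the statement. Taking the $k$ points to serve as the $k$ containers, the hypothesis that no point is incident to more than $k^{a-1}$ $d$-flats is precisely the per-container capacity of $k^{a-1}$, and the desired conclusion that at least $ck/2$ points are each incident to at least $\lfloor ck^{a-1}/2 \rfloor$ $d$-flats is precisely the conclusion of the lemma. So the entire content of the proof is the bookkeeping needed to line the two hypotheses up.

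The one place the statements do not match verbatim is the count of objects: Lemma \ref{pigeonhole} asks for exactly $\lceil ck^a \rceil$ objects, whereas here we are given only \emph{at least} $ck^a$ incidences. I would dispose of this as follows. The number of incidences is an integer, so ``at least $ck^a$'' already gives ``at least $\lceil ck^a \rceil$''; I would then pass to an arbitrary subcollection of exactly $\lceil ck^a \rceil$ incidences. Discarding incidences can only lower the number of $d$-flats recorded through each point, so the capacity $k^{a-1}$ is still respected, and any point incident to at least $\lfloor ck^{a-1}/2 \rfloor$ flats of the subcollection is a fortiori incident to at least that many flats of the original collection. Hence a conclusion proved for the subcollection transfers back to the original configuration. (Alternatively, one can observe directly that Lemma \ref{pigeonhole} is monotone in the number of objects, since extra objects can only move more containers into the set $\mathcal{A}$.)

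Applying Lemma \ref{pigeonhole} to this subcollection of exactly $\lceil ck^a \rceil$ incidences distributed among the $k$ points, with capacity $k^{a-1}$, then yields at least $ck/2$ points each incident to at least $\lfloor ck^{a-1}/2 \rfloor$ flats, which is the claim. I expect no genuine mathematical obstacle, since Lemma \ref{pigeonhole} already carries the weight of the argument; the only thing to be careful about is the integrality and capacity accounting just described, together with ensuring that the container count is $k$ (should fewer than $k$ points be present, one may append points incident to no flats until there are exactly $k$ containers, which alters neither the incidence count nor the capacity bound).
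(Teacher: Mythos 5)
Your proof is correct and is exactly the paper's argument: the paper simply declares the corollary an immediate consequence of Lemma \ref{pigeonhole} under the objects-as-incidences, containers-as-points dictionary, and your write-up supplies the routine bookkeeping (integrality and monotonicity in the object count, padding to $k$ containers) that the paper leaves implicit. The only caveat, which applies equally to the paper's version, is that both arguments tacitly assume the point set has at most $k$ elements (otherwise the stated conclusion can fail), and this holds in the corollary's sole application in Proposition \ref{RExists}.
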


This is an immediate consequence of Lemma \ref{pigeonhole}.

\begin{lemma}\label{longLineLemma}
Let $\mathcal{T}$ be a set of $n$ points in $\mathbb{E}^3$ such that no more than $n'$ lie on any plane or pair of lines.
If there is a subset $\mathcal{L} \subset \mathcal{T}$ of at least $n'-k$ collinear points, then
\begin{enumerate}
\item no subset of $\mathcal{T} \setminus \mathcal{L}$ with more than $k$ points is collinear, and
\item no subset of $\mathcal{T} \setminus \mathcal{L}$ with more than $k$ points is on a plane with the line covering $\mathcal{L}$.
\end{enumerate}
\end{lemma}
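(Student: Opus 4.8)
The plan is to prove both parts by contradiction, using in each case a single counting observation: combining the large collinear set $\mathcal{L}$ with an offending subset would force more than $n'$ points of $\mathcal{T}$ onto a single plane or a single pair of lines, violating the hypothesis. Let $\ell$ denote the line covering $\mathcal{L}$, so that $\mathcal{L} \subseteq \ell$ and $|\mathcal{L}| \geq n'-k$. Before the two cases I would record one preliminary remark: every line is contained in some plane, so the hypothesis that no plane carries more than $n'$ points of $\mathcal{T}$ already implies that no \emph{line} carries more than $n'$ points of $\mathcal{T}$. This remark is what lets the degenerate case in part 1 go through.

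For part 1, suppose toward a contradiction that some $\mathcal{M} \subseteq \mathcal{T} \setminus \mathcal{L}$ with $|\mathcal{M}| > k$ is collinear, lying on a line $\ell'$. Since $\mathcal{M} \cap \mathcal{L} = \emptyset$, the sets $\mathcal{L}$ and $\mathcal{M}$ are disjoint, so together they contain at least $(n'-k) + (k+1) = n'+1$ points. If $\ell' = \ell$, these $n'+1$ points all lie on the single line $\ell$, contradicting the preliminary remark. If $\ell' \neq \ell$, they all lie on the pair of lines $\{\ell, \ell'\}$, contradicting the hypothesis that no pair of lines carries more than $n'$ points of $\mathcal{T}$. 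Either way we reach a contradiction.

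For part 2, suppose toward a contradiction that some $\mathcal{N} \subseteq \mathcal{T} \setminus \mathcal{L}$ with $|\mathcal{N}| > k$ lies on a plane $P$ containing $\ell$. Then $P$ contains all of $\mathcal{L}$ (because $\mathcal{L} \subseteq \ell \subseteq P$) as well as all of $\mathcal{N}$, and since $\mathcal{N}$ and $\mathcal{L}$ are disjoint, $P$ carries at least $(n'-k)+(k+1) = n'+1$ points of $\mathcal{T}$, contradicting the hypothesis on planes.

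I do not expect any substantial obstacle here: the whole argument is the one disjoint-union count $|\mathcal{L}| + |\mathcal{M}| \geq n'+1$ (respectively with $\mathcal{N}$), and the statement is essentially a combinatorial observation rather than a geometric one. The only point requiring care is the bookkeeping around the line $\ell$ in part 1 — namely, separating the degenerate case $\ell' = \ell$, where the ``pair of lines'' collapses to a single line and one must instead invoke the per-line bound, from the generic case $\ell' \neq \ell$. Stating the preliminary remark explicitly is what keeps the $\ell' = \ell$ case clean.
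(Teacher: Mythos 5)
Your proof is correct and is essentially the paper's own argument: the paper's (two-sentence) proof makes exactly the same disjoint-union count $|\mathcal{L}| + |\mathcal{V}| > (n'-k) + k = n'$ against the plane/pair-of-lines hypothesis. Your explicit handling of the degenerate case $\ell' = \ell$ is a minor tidiness improvement over the paper's terser version, not a different approach.
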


\begin{proof}
Let $\mathcal{V} \subseteq \mathcal{T} \setminus \mathcal{L}$ with more than $k$ points that is collinear or on a plane with $\mathcal{L}$.
The union of $\mathcal{L}$ and $\mathcal{V}$  will form a set of more than $n'$ points.
\end{proof}

\subsection{Proof of Theorem \ref{BeckErdos3}}

\begin{proof}
Let the full set of $n$ points be $\mathcal{T}$.
A plane or pair of lines that is incident to at least as many points as any other plane or pair of lines exists; we will denote the set of $n-x$ points incident to this plane or pair of lines by $\mathcal{H}$, and denote $\mathcal{T} \setminus \mathcal{H}$ by $\mathcal{X}$.
In other words:
\begin{eqnarray*}
\mathcal{T} & = & \mathcal{H} \cup \mathcal{X} \\
|\mathcal{T}| & = & n \\
|\mathcal{X}| & = & x \geq k \\
|\mathcal{H}| & = & n-x \leq n-k.
\end{eqnarray*}
Since $k \leq x$, it will be sufficient to show that $\Omega(nx^2)$ planes are spanned by $\mathcal{T}$. 

\it Case 1. \rm 
Suppose that no plane contains more than $\beta_3n$ points of $\mathcal{T}$, where $\beta_3$ is a constant from Lemma \ref{TwoExtremes}.
The following ``two extremes" lemma covers this case.
\begin{lemma}[Beck \cite{Be83}]\label{TwoExtremes}
There are positive constants $\beta_d$ and $\gamma_d$ depending only on the dimension such that any set of $n$ points in $\mathbb{E}^d$ of which fewer than $\beta_d n$ points are on any single hyperplane spans at least $\gamma_dn^d$ hyperplanes, for $d \geq 2$.
\end{lemma}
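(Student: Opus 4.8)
The plan is to prove the statement by induction on the dimension $d$, with the base case $d=2$ being exactly Beck's planar lemma (Lemma \ref{TwoExtremesInE2}): taking $\beta_2 = 1/100$, a planar set of $n$ points with at most $\beta_2 n$ collinear spans $\Omega(n^2)$ lines. For the inductive step I would assume the statement in dimension $d-1$, with constants $\beta_{d-1}, \gamma_{d-1}$, and deduce it in dimension $d$ with $\beta_d$ chosen small relative to $\beta_{d-1}$.

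The engine of the inductive step is central projection. Fix a point $P$ of the set $S$ and project the remaining $n-1$ points from $P$ onto a generic hyperplane, obtaining a multiset of $n-1$ points in $\mathbb{E}^{d-1}$ whose distinct points $\bar S_P$ are in bijection with the lines through $P$ that meet $S$. The key observation is that spanned hyperplanes of $\mathbb{E}^d$ through $P$ correspond exactly to hyperplanes of $\mathbb{E}^{d-1}$ spanned by $\bar S_P$: a hyperplane through $P$ is spanned iff it contains $d-1$ further points of $S$ that are affinely independent together with $P$, and these project to $d-1$ points of $\bar S_P$ spanning a hyperplane of the image. Hence, if $\bar S_P$ satisfies the $(d-1)$-dimensional hypothesis, the inductive hypothesis yields $\Omega(n^{d-1})$ spanned hyperplanes through $P$.

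I would then argue that a constant fraction of the points $P$ are \emph{good}, meaning $\bar S_P$ has $\Omega(n)$ distinct points with at most $\beta_{d-1}$ times that many on any hyperplane. The second condition follows from the global hypothesis: a hyperplane of the image carrying too many points pulls back to a hyperplane of $\mathbb{E}^d$ through $P$ carrying that many points of $S$, which choosing $\beta_d \ll \beta_{d-1}$ forbids. The first condition, $|\bar S_P| = \Omega(n)$ for $\Omega(n)$ points $P$, is more delicate, since the points around $P$ could in principle cluster onto a few rich lines; I would establish it by counting spanned lines (equivalently $\sum_P |\bar S_P|$), using Beck's planar lemma (Lemma \ref{TwoExtremesInE2}) applied to a generic planar projection of $S$ to show that $\Omega(n^2)$ lines are spanned, so that the average point sees $\Omega(n)$ distinct lines. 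Summing the per-point bound over the good points then produces $\Omega(n^d)$ incidences between points and spanned hyperplanes.

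The hard part, and the step I expect to be the main obstacle, is converting this incidence count into a bound on the number of \emph{distinct} hyperplanes. Dividing naively by the maximum number of points per hyperplane only gives $\Omega(n^{d-1})$ hyperplanes, losing a factor of $n$, because a single hyperplane can absorb up to $\beta_d n$ of the incidences; this is precisely the obstruction already present in the planar case. To recover the full $\Omega(n^d)$ I would run the \emph{rich/poor} dichotomy underlying Beck's planar argument: show that the incidences (equivalently, the affinely independent $d$-tuples) supported on hyperplanes carrying at least a large constant $C$ points account for at most half the total, so that $\Omega(n^d)$ incidences remain on hyperplanes carrying $O(1)$ points, each overcounted only $O(1)$ times. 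Bounding the contribution of the rich hyperplanes is the crux, and is where I would invoke a higher-dimensional incidence bound for points and spanned hyperplanes, namely the Agarwal--Aronov bound (Lemma \ref{AgarwalAronov}), to show that rich spanned hyperplanes are too few to carry a constant fraction of the $d$-tuples; the degenerate $d$-tuples lying on lower-dimensional flats are negligible by the same hypothesis applied in lower dimensions. I expect this rich-hyperplane estimate, together with the bookkeeping needed to handle the multiplicities introduced by the projection, to be the most technical part of the argument.
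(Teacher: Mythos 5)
First, a point of comparison: the paper does not prove this statement at all. Lemma \ref{TwoExtremes} is quoted verbatim from Beck \cite{Be83} and used as a black box in Case 1 of the proof of Theorem \ref{BeckErdos3}; there is no in-paper argument to match yours against. So the only question is whether your proposal would stand on its own as a proof of Beck's theorem.

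The projection induction in your first half is a sound and standard outline: the bijection between spanned hyperplanes through $P$ and spanned hyperplanes of $\bar S_P$ is correct, the averaging argument showing that $\Omega(n)$ points $P$ have $|\bar S_P|=\Omega(n)$ works (the planar lemma gives $\Omega(n^2)$ spanned lines, hence $\sum_P|\bar S_P|=\Omega(n^2)$ with each summand at most $n-1$), and choosing $\beta_d\ll\beta_{d-1}$ makes the inductive hypothesis applicable to each good $\bar S_P$. This yields $\Omega(n^d)$ point--hyperplane incidences, and you correctly identify that passing from incidences to distinct hyperplanes is the crux. The gap is that the tool you propose for the crux cannot close it. From Lemma \ref{AgarwalAronov}, the number $m_{\ge r}$ of spanned hyperplanes containing at least $r$ points satisfies $r\,m_{\ge r}=O(m_{\ge r}^{2/3}n^{d/3}+n^{d-1})$, i.e.\ $m_{\ge r}=O(n^d/r^3+n^{d-1}/r+n^{d-2})$. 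The contribution of the rich hyperplanes to the count of affinely independent $d$-tuples is then bounded, dyadically, by
\[
\sum_{C\le 2^j\le \beta_d n} 2^{jd}\,m_{\ge 2^j}
 \;=\;O\!\left(\sum_j \left(n^d\,2^{j(d-3)}+n^{d-1}2^{j(d-1)}+n^{d-2}2^{jd}\right)\right),
\]
which is $\Theta(n^d\log n)$ already for $d=3$ and $\Theta(n^{2d-3})$ for $d\ge 4$ once $r$ ranges up to $\beta_d n$. In the plane the analogous Szemer\'edi--Trotter computation gives $O(n^2/C+\beta_2 n^2)$ and the dichotomy closes; in dimension $d\ge 3$ the exponent $d$ on $r$ overwhelms the exponent $3$ in the rich-hyperplane bound, so Agarwal--Aronov is quantitatively too weak to show that richly populated hyperplanes carry only half the independent $d$-tuples. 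You would need a genuinely different mechanism here (Beck's own argument does not run through a rich/poor dichotomy on hyperplanes), and as written the proof does not go through.
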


If $|\mathcal{H}| \leq \beta_3n$, then no plane contains more than $\beta_3n$ points, which is case 1.
For the remaining three cases, we may assume that $|\mathcal{H}| > \beta_3n$.

\it Case 2. \rm
Let $\mathcal{L}$ be the largest set of collinear points in $\mathcal{H}$, and suppose $|\mathcal{L}| = |\mathcal{H}|-x'$ with $x' \leq \min(c_ex/2, \beta_3x/2)$, 
where $c_e$ is the constant from Lemma \ref{BeckErdos}.

Let $\Lambda_r$ be the set of lines spanned by $\mathcal{X}$ that do not intersect the line that covers $\mathcal{L}$.

\begin{proposition}\label{RExists}
There is a set of points $\mathcal{R} \subset \mathcal{X}$ with $|\mathcal{R}| = \Omega(x)$ such that each point of $\mathcal{R}$ is incident to $\Omega(x)$ lines of $\Lambda_r$.
\end{proposition}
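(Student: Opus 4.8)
The plan is to use the rich line $\ell$ covering $\mathcal{L}$ as an organizing device for $\mathcal{X}$, and to produce $\mathcal{R}$ by a Beck--Erd\H{o}s line count followed by the modified pigeonhole principle. Write $\ell$ for the line covering $\mathcal{L}$. First I would extract the two structural facts that the Case 2 hypotheses supply. Because $\mathcal{H}$ is incident to at least as many points as \emph{any} plane or pair of lines, no plane and no pair of lines carries more than $n-x$ points of $\mathcal{T}$; hence Lemma \ref{longLineLemma} applies with $n'=n-x$ and parameter $x'$, since $|\mathcal{L}| = |\mathcal{H}| - x' = (n-x)-x'$. Its two conclusions give at one stroke that every line meets $\mathcal{X}$ in at most $x'$ points and that every plane through $\ell$ meets $\mathcal{X}$ in at most $x'$ points. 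In particular the points of $\mathcal{X}$ lying on $\ell$ number at most $x'$, so the off-$\ell$ part $\bar{\mathcal{X}}$ satisfies $|\bar{\mathcal{X}}| \ge x - x' = \Omega(x)$, using $x' \le c_e x/2$.

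Next I would count lines. Since the largest collinear subset of $\bar{\mathcal{X}}$ has at most $x' \le c_e x/2 \le x/2$ points, the Beck--Erd\H{o}s theorem (Lemma \ref{BeckErdos}) shows that $\bar{\mathcal{X}}$ spans $\Omega(x^2)$ lines. The observation linking this to $\Lambda_r$ is that a line through two points of $\bar{\mathcal{X}}$ fails to meet $\ell$ precisely when those points lie in \emph{different} planes of the pencil of planes through $\ell$: two off-$\ell$ points are coplanar with $\ell$ exactly when they determine the same plane of the pencil, and such a line then meets $\ell$. So I would bound the number of ``same-plane'' lines by grouping $\bar{\mathcal{X}}$ according to its plane in this pencil; each group has at most $x'$ points, so the number of lines internal to some group is at most $\tfrac{1}{2}x'\,|\bar{\mathcal{X}}| = O(c_e x^2)$. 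Subtracting, the number of lines of $\Lambda_r$ spanned by $\bar{\mathcal{X}}$ is at least the Beck--Erd\H{o}s count minus this quantity.

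The main obstacle is exactly this subtraction: I must guarantee the ``same-plane'' lines are a strict minority, so that $\Omega(x^2)$ genuinely survives. This is where the precise Case 2 threshold $x' \le c_e x/2$ (with $c_e$ the Beck--Erd\H{o}s constant) is used: it forces the discarded lines to be a fraction of the total bounded away from $1$ by a constant depending only on $c_e$, which is safely below $3/4$. Granting this, the number of lines of $\Lambda_r$ spanned by $\bar{\mathcal{X}}$ is $\Omega(x^2)$, and I would finish by a routine double count. These lines carry $\Omega(x^2)$ incidences with $\bar{\mathcal{X}}$, and no point of $\bar{\mathcal{X}}$ is incident to more than $x-1 < x$ of them, so Corollary \ref{pigeonholeFlats} (with $a=2$, taking the points of $\bar{\mathcal{X}}$ as containers and the $\Lambda_r$-incidences as objects) yields $\Omega(x)$ points each incident to $\Omega(x)$ lines of $\Lambda_r$. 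Taking $\mathcal{R}$ to be this set completes the argument.
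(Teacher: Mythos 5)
Your proposal is correct and follows essentially the same route as the paper: a Beck--Erd\H{o}s lower bound on the lines spanned by $\mathcal{X}$, an upper bound on the lines meeting $\ell$ derived from Lemma \ref{longLineLemma}'s cap of $x'$ points of $\mathcal{X}$ on any plane through $\ell$, a subtraction made positive by the Case~2 threshold $x' \le c_e x/2$, and Corollary \ref{pigeonholeFlats} to extract $\mathcal{R}$. Your only deviations --- working with the off-$\ell$ points $\bar{\mathcal{X}}$ and counting same-pencil-plane pairs rather than per-point incidences divided by two --- are harmless variants of the paper's count (and in fact sidestep the degenerate case of points of $\mathcal{X}$ lying on $\ell$ itself).
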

\begin{proof}
Let $l_t$ be the number of lines spanned by $\mathcal{X}$, let $l_s$ be the number of lines spanned by $\mathcal{X}$ that intersect the line covering $\mathcal{L}$, and let $l_r = |\Lambda_r|$.
In other words,
\[l_r = l_t - l_s.\]

Since $|\mathcal{L}| = |\mathcal{H}| - x'$, Lemma \ref{longLineLemma} implies that no subset of $\mathcal{X}$ larger than $x'$ is collinear.
Since $x' < x/2$, Lemma \ref{BeckErdos} implies that
\[l_t > c_ex^2/2. \]

Lemma \ref{longLineLemma} also implies that no subset of $\mathcal{X}$ larger than $x'$ is on a plane that covers $\mathcal{L}$.
Consequently, no point of $\mathcal{X}$ is on more than $x'-1$ lines spanned by $\mathcal{X}$ that intersect $\mathcal{L}$.
Summing over all $x$ points of $\mathcal{X}$ and dividing by $2$ since each line spanned by $\mathcal{X}$ is incident to at least two points,
\[l_s \leq xx'/2 \leq  c_ex^2/4\]

Since $l_r = l_t - l_s$,
\[l_r > c_ex^2/2 - c_ex^2/4 \geq c_ex^2/4 \geq c_ax^2\]
for some positive constant $c_a$.

Since $l_r = \Omega(x^2)$ and each line of $\Lambda_r$ is on at least two points of $\mathcal{X}$, there must be $\Omega(x^2)$ incidences between $\Lambda_r$ and points of $\mathcal{X}$.
From this, Corollary \ref{pigeonholeFlats} implies that there exists a set of $\Omega(x)$ points each incident to $\Omega(x)$ lines of $\Lambda_r$, completing the proof of Proposition \ref{RExists}.
\end{proof}

Let $P$ be an arbitrary point in $\mathcal{R}$, and let $\Pi_\mathcal{T}$ be the set of planes spanned by $\mathcal{T}$.
Let $\mathbf{I}(P, \Pi_\mathcal{T})$ be the number of incidences between $P$ and $\Pi_\mathcal{T}$.

\begin{proposition}\label{IP}
\[\mathbf{I}(P, \Pi_\mathcal{T}) = \Omega(nx).\]
\end{proposition}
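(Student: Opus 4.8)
The plan is to show that a single point $P \in \mathcal{R}$ already lies on $\Omega(nx)$ spanned planes, obtained by joining the many lines of $\Lambda_r$ through $P$ to the many points of $\mathcal{L}$. Write $\ell_0$ for the line covering $\mathcal{L}$. First I would record the two size bounds that drive the count. Since we are past Case 1 we have $|\mathcal{H}| > \beta_3 n$, and since $x' \leq \beta_3 x/2 \leq \beta_3 n/2$, it follows that $|\mathcal{L}| = |\mathcal{H}| - x' \geq \beta_3 n/2 = \Omega(n)$. By Proposition \ref{RExists}, $P$ lies on $\Omega(x)$ lines of $\Lambda_r$; each such line misses $\ell_0$, hence is either parallel or skew to it, and at most one line through $P$ is parallel to $\ell_0$, so $\Omega(x)$ of them are skew to $\ell_0$. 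Let $\Lambda_P$ denote this set of skew lines through $P$, so that $|\Lambda_P| = \Omega(x)$.

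For each $\ell \in \Lambda_P$ and each $Q \in \mathcal{L}$ we have $Q \notin \ell$ (as $\ell \cap \ell_0 = \emptyset$), so $\ell$ and $Q$ span a plane $\pi_{\ell,Q}$; it contains $P$ together with at least two points of $\mathcal{X}$ on $\ell$ and the point $Q \notin \ell$, hence is a plane spanned by $\mathcal{T}$ passing through $P$. The crucial observation is an injectivity statement: for fixed $\ell$, distinct $Q, Q' \in \mathcal{L}$ give distinct planes, since a common plane would contain $\ell$ and two points of $\ell_0$, hence all of $\ell_0$, forcing $\ell$ and $\ell_0$ to be coplanar and contradicting skewness. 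Thus each $\ell \in \Lambda_P$ contributes $|\mathcal{L}| = \Omega(n)$ distinct planes through $P$, and there are $\Omega(x)$ such lines; the content of the proposition is to convert this into $\Omega(nx)$ \emph{distinct} planes despite overcounting between different lines.

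The main obstacle is precisely this overcounting. Naive inclusion--exclusion fails: two distinct lines through $P$ span a unique plane, so any two lines share at most one common plane, giving roughly $|\Lambda_P|\,|\mathcal{L}| - \binom{|\Lambda_P|}{2}$ planes, and since the constants hidden in $|\Lambda_P| = \Omega(x)$ and $|\mathcal{L}| = \Omega(n)$ are unrelated, the subtracted term need not be dominated. I would instead count with multiplicity and apply Cauchy--Schwarz. For a plane $\pi$ in the image of $(\ell,Q) \mapsto \pi_{\ell,Q}$, note that $\pi$ cannot contain $\ell_0$ (else a skew $\ell \subset \pi$ would be coplanar with $\ell_0$), so $\pi$ meets $\ell_0$ in the single point $Q \in \mathcal{L}$; letting $a(\pi)$ be the number of lines of $\Lambda_P$ contained in $\pi$, the fibers of the map have size $a(\pi)$ and hence $\sum_\pi a(\pi) = |\Lambda_P|\,|\mathcal{L}| = \Omega(nx)$, while the unique-plane property gives $\sum_\pi \binom{a(\pi)}{2} \leq \binom{|\Lambda_P|}{2} = O(x^2)$. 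Consequently $\sum_\pi a(\pi)^2 = \sum_\pi a(\pi) + 2\sum_\pi \binom{a(\pi)}{2} = O(nx) + O(x^2) = O(nx)$, using $x \leq n$. The number $N$ of distinct planes then satisfies $N \geq (\sum_\pi a(\pi))^2 / \sum_\pi a(\pi)^2 = \Omega((nx)^2)/O(nx) = \Omega(nx)$ by Cauchy--Schwarz, and since all of these planes pass through $P$ this yields $\mathbf{I}(P, \Pi_\mathcal{T}) \geq N = \Omega(nx)$, as required.
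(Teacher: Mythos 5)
Your argument is correct, and it reaches the conclusion by a genuinely different route than the paper. The paper projects from $P$ onto a generic plane $M$: the points of $\mathcal{L}$ project to $\Omega(n)$ collinear points, the $\Omega(x)$ lines of $\Lambda_r$ through $P$ (minus those in the plane through $P$ and $\mathcal{L}$) project to $\Omega(x)$ further points, and then the planar Beck--Erd\H{o}s theorem (Lemma \ref{BeckErdos}) applied to this projected configuration yields $\Omega(nx)$ lines on $M$, each lifting to a distinct plane through $P$. You instead count the same family of planes directly in $\mathbb{E}^3$ --- the joins $\pi_{\ell,Q}$ of a line $\ell\in\Lambda_P$ with a point $Q\in\mathcal{L}$ --- and control the overcounting by an elementary second-moment/Cauchy--Schwarz argument, using the facts that the point $Q$ is recoverable as $\pi\cap\ell_0$ and that two distinct lines through $P$ determine a unique common plane. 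Your fiber analysis, the exclusion of the (at most one) line through $P$ parallel to $\ell_0$, and the bound $\sum_\pi\binom{a(\pi)}{2}\le\binom{|\Lambda_P|}{2}=O(x^2)=O(nx)$ are all sound (the last step tacitly uses $|\Lambda_P|\le x$, which holds since distinct lines of $\Lambda_P$ meet $\mathcal{X}\setminus\{P\}$ in disjoint nonempty sets; it would be worth saying so). What each approach buys: the paper's projection is shorter and reuses the planar machinery already in play, and in principle also harvests planes spanned by pairs of lines of $\Lambda_P$; yours is more self-contained for this step, avoids invoking Beck--Erd\H{o}s a second time, and makes the multiplicity bookkeeping completely explicit rather than delegating it to the planar theorem.
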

\begin{proof}
Let $\mathcal{A}$ be the set of points on the plane that covers $P$ and $\mathcal{L}$.
Using $P$ as a center, project $(\mathcal{X} \setminus \mathcal{A}) \cup \mathcal{L}$ onto a plane $M$ in general position.
Let $\mathcal{L'}$ be the projection of $\mathcal{L}$ on $M$.
Let $\mathcal{X'}$ be the projection of $\mathcal{X} \setminus \mathcal{A}$ on $M$.
Each point of $\mathcal{L}$ will project to a distinct point on $M$, so 
\[|\mathcal{L'}|=|\mathcal{L}| = |\mathcal{H}| - x' \geq \beta_3n - \beta_3x/2 > \beta_3n/2.\]

Since $P \in \mathcal{R}$, Proposition \ref{RExists} implies that $P$ is incident to $\Omega(x)$ lines spanned by $\mathcal{X} \setminus \mathcal{A}$.
Each of these lines will project to a distinct point on $M$, so
\[|\mathcal{X'}| = \Omega(x).\]

Let $l_m$ be the number of lines spanned on $M$ by $\mathcal{L'} \cup \mathcal{X'}$.
Each of these lines corresponds to a plane through $P$, so
\[\mathbf{I}(P,\Pi_\mathcal{T}) \geq l_m.\]

If $|\mathcal{X'}| > \beta_3x/2$, throw away all but $\beta_3x/2$ of the points of $\mathcal{X'}$ to ensure that the line containing $\mathcal{L'}$ has more points of $\mathcal{L'} \cup \mathcal{X'}$ than any other.
Lemma \ref{BeckErdos} implies that 
\[l_m = \Omega((|\mathcal{L'}|+|\mathcal{X'}|)|\mathcal{X'}|) = \Omega((\beta_3n/2)\Omega(x)) = \Omega(nx).\]
This completes the proof of Proposition \ref{IP}.
\end{proof}

Repeat the count for each of the $\Omega(x)$ points in $\mathcal{R}$ to get a lower bound on $\mathbf{I}(\mathcal{R}, \Pi_\mathcal{T})$, the number of incidences between the points of $\mathcal{R}$ and planes spanned by $\mathcal{T}$.
\begin{eqnarray*}
\mathbf{I}(\mathcal{R},\Pi_\mathcal{T}) & = & \sum_{P \in \mathcal{R}}\mathbf{I}(P,\Pi_\mathcal{T}) \\
& = & \sum_{i=0}^{|\mathcal{R}|}\Omega(nx) \\
& = & \Omega(nx^2)
\end{eqnarray*}
By Lemma \ref{bichromaticConsequence}, this lower bound implies that $\mathcal{T}$ determines $\Omega(nx^2)$ planes.

Since we have shown that the conclusion of the theorem holds when some line is incident to all but $c_mx = \min(\beta_3x/2, c_ex/2)$ points of $\mathcal{H}$, we may assume that no line is incident to more than $|\mathcal{H}|- c_mx$ points of $\mathcal{H}$ for the remaining cases.

\it Case 3. \rm
Suppose that the points of $\mathcal{H}$ lie in a plane.

Let $l_h$ be the number of lines spanned by $\mathcal{H}$.
Since $|\mathcal{H}| \geq \beta_3n$ and no line is incident to more than $|\mathcal{H}|- c_m x$ points of $\mathcal{H}$, Lemma \ref{BeckErdos} implies that
\[l_h \geq c_e|\mathcal{H}| c_m x \geq c_e c_m \beta_3 nx.\]
Each of the $x$ points in $\mathcal{X}$ is incident to a plane for each line spanned by $\mathcal{H}$, so
\[\mathbf{I}(\mathcal{X},\Pi_\mathcal{T}) \geq c_e c_m \beta_3 nx^2.\]
By Lemma \ref{bichromaticConsequence}, this implies that there are $\Omega (nx^2)$ planes spanned by the point set, proving the theorem for this case.

\it Case 4. \rm
If none of the previous cases hold, then the points of $\mathcal{H}$ must lie on two skew lines.
Let the sets of points on these lines be $\mathcal{L}_0$ and $\mathcal{L}_1$, with 
\[|\mathcal{L}_0| \geq |\mathcal{L}_1| \geq c_m x.\]
(If $\mathcal{L}_1 < c_m x$, we'd be in case 2.)
 Since $|\mathcal{H}| \geq \beta_3n$,
\[|\mathcal{L}_0| = |\mathcal{H}| - |\mathcal{L}_1| > \beta_3n/2.\]

Let P be a point in $\mathcal{X}$.
Using $P$ as a center, project the points of $\mathcal{H}$ onto a plane $M$ in general position.
Let $\mathcal{H'}$ be the projection of $\mathcal{H}$ on $M$.
Let $l_{h'}$ be the number of lines spanned by $\mathcal{H'}$.
Since $\mathcal{L}_0$ and $\mathcal{L}_1$ are skew and $P$ is on neither line, the points of $\mathcal{H'}$ will lie on crossing lines on the plane $M$.
The projected lines have at most $1$ point in common, so
\[l_{h'} \geq (|\mathcal{L}_0|-1)(|\mathcal{L}_1|-1) \geq \frac12|\mathcal{L}_0||\mathcal{L}_1| \geq \beta_3 c_m nx/4.\]
Each line spanned by the points of $\mathcal{H'}$ corresponds to a plane through $P$, so 
\[\mathbf{I}(P,\Pi_\mathcal{T}) \geq \beta_3 c_m nx/4.\]
The same logic holds for each of the $x$ points in $X$, so
\[\mathbf{I}(\mathcal{X},\Pi_\mathcal{T}) \geq \beta_3 c_m nx^2/4.\]
By Lemma \ref{bichromaticConsequence}, this implies that there are $\Omega (nx^2)$ hyperplanes spanned by the full point set.
\end{proof}

\section{Counterexample to Purdy's conjecture}
\label{sec:WeakPurdy}

In this section, we present an infinite family of counterexamples in dimensions 4 and higher to a conjecture of Purdy's, and we replace it with a modified version that is not subject to our counterexample.

Define the rank of a flat to be one more than its dimension. Purdy's conjecture \cite{Pur81, Pur86, EP95, BrMoPa05} was that, if $n$ is sufficiently large, then any set of $n$ points in $\mathbb{E}^d$, that cannot be covered by a set of flats whose ranks sum to $d+1$, spans at least as many hyperplanes as $(d-2)$-flats.
Gr\"unbaum found counterexamples for $d=3$ with up to 16 points \cite{GrSh84}, but until now no infinite family of counterexamples has been known.
\begin{theorem}\label{OldPurdy}
In $\mathbb{E}^d$, with $d \geq 4$, it is possible to construct a set of $n$ points, that cannot be covered by any set of flats whose ranks sum to $d+1$, that spans $\Theta_d(n^{d-2})$ hyperplanes and $\Theta_d(n^{d-1})$ $(d-2)$-flats.
\end{theorem}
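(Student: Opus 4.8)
The plan is to exhibit the point set as $n$ points distributed as evenly as possible among $d-1$ pairwise skew lines $\ell_1,\dots,\ell_{d-1}$ chosen in general position in $\mathbb{E}^d$, with $\lfloor n/(d-1)\rfloor$ points on each line. (For $d=3$ this degenerates to two skew lines, which is exactly a configuration excluded by the rank condition, and this is what forces the hypothesis $d\ge 4$.) I would verify three things in turn: that this set cannot be covered by flats of total rank $d+1$, that it spans $\Theta_d(n^{d-1})$ $(d-2)$-flats, and that it spans only $\Theta_d(n^{d-2})$ hyperplanes. Since $n^{d-2}=o(n^{d-1})$, these counts contradict Purdy's conjecture, which predicts at least as many hyperplanes as $(d-2)$-flats.

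For non-coverability, the key observation is that any flat containing two distinct points of a line contains the whole line. A covering family of total rank $d+1$ consists of at most $d+1$ flats, so for each $i$ the $\Theta(n)$ points of $\ell_i$ are distributed among boundedly many flats and some flat contains at least two---hence all---of them; thus each $\ell_i$ lies entirely in some covering flat. Choosing the lines generically, any $j$ of them affinely span a flat of dimension $\min\{2j-1,\,d\}$, so a single proper flat containing $j$ of the lines has rank at least $2j$ when it exists and otherwise the lines must be split among several flats; either way the cost is at least $2$ per line, for a total of at least $2(d-1)$. Since $2(d-1)>d+1$ exactly when $d\ge 4$, the set is not coverable by flats of rank summing to $d+1$.

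Both flat counts I would derive from a single bookkeeping observation: if a subset of the points meets $t$ of the lines and contains at least two points on exactly $s$ of them, then for generic lines its affine span has dimension $\min\{(t-1)+s,\,d\}$, since each "doubled" line contributes both a point and its direction. A $(d-2)$-flat needs $(t-1)+s=d-2$, and the term with the most degrees of freedom is $t=d-1$, $s=0$---one point from each line---giving $\Theta(n^{d-1})$ flats; genericity ensures each such flat $F$ satisfies $F\cap\ell_i=\{p_i\}$, so distinct tuples give distinct flats, while every $s\ge 1$ term is of strictly lower order. A hyperplane needs $(t-1)+s=d-1$, which (as $t\le d-1$) forces $t=d-1$ and $s=1$: touch every line and double exactly one. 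Such a hyperplane is determined by the doubled line together with the $d-2$ single points, so distinct data give distinct hyperplanes and the count is $\Theta(n^{d-2})$; the remaining $s\ge 2$ terms are smaller.

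The main obstacle is the hyperplane upper bound: establishing the dimension formula $\min\{(t-1)+s,d\}$ rigorously and checking that genericity really forces every spanned hyperplane into the single type $(t,s)=(d-1,1)$, so that no coincidental degeneracy inflates the count to $n^{d-1}$ or beyond. Making the general-position hypotheses precise---pairwise skewness, affine independence of the chosen points $p_i$ and of the line directions, spanning of all of $\mathbb{E}^d$, and the single-point intersections $F\cap\ell_i=\{p_i\}$---and confirming that they can be met simultaneously for every $d\ge 4$ is the delicate part; once these are in hand, the remaining estimates are routine counting.
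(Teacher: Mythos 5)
Your proposal is correct and follows essentially the same route as the paper: the identical construction ($d-1$ generic lines with $\Theta(n)$ points each), the same rank-counting argument for non-coverability (cost $2$ per line, $2(d-1)>d+1$ for $d\ge 4$), and the same enumeration of spanned flats by the number of covering lines they contain, with the dominant terms giving $\Theta_d(n^{d-2})$ hyperplanes and $\Theta_d(n^{d-1})$ $(d-2)$-flats. The only quibble is your phrase that the hyperplane condition ``forces $t=d-1$, $s=1$'' --- configurations with $s=j\ge 2$ and $t=d-j$ also span hyperplanes, as your own next clause (and the paper's sum over $1\le j\le\lfloor d/2\rfloor$) acknowledges; they are simply lower order.
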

\begin{proof}
We will construct a set of $n = k(d-1)$ points.
First, put $d-1$ lines in general position - we will refer to these lines as the covering lines.
On each of the covering lines, put $k$ points, so that any $d-1$ of these points, one on each of the covering lines, spans a $(d-2)$-flat.

The total rank of the covering lines is $2(d-1)$, which is greater than $d+1$ when $d > 3$.
Since the covering lines are in general position, no more than $m$ lines may be covered by a flat of rank $2m < d+1$, so there is no way to construct a set of flats covering the point set with total rank less than $d+1$.

We will count the number of hyperplanes spanned by the $n$ points by counting the number of hyperplanes that contain $j$ covering lines and summing over all possible values for $j$.

Let $h_j$ be the number of hyperplanes that contain $j$ covering lines.
A hyperplane that contains $j$ lines will contain $d-2j$ points from the remaining covering lines.
$h_j$ is equal to the number of ways to choose the $j$ lines contained in the hyperplane, times the number of ways to choose the $d-2j$ lines that each contribute $1$ point, times the number of ways to choose $1$ point on each of $d-2j$ lines.
In other words,
\[h_j = {d-1 \choose j}{d-1-j \choose d-2j}k^{d-2j}.\]

Let $h$ be the total number of hyperplanes spanned by the point set.
Since there are only $d-1$ covering lines, a hyperplane must contain at least one of the lines.
Since the lines are in general position, no hyperplane will contain more than $\lfloor d/2 \rfloor$ covering lines.
Consequently,
\begin{align*}
h &= \sum_{j} h_j \\
&= \sum_{1 \leq j \leq \lfloor d/2 \rfloor}{d-1 \choose j}{d-1-j \choose d-2j}k^{d-2j}.\\
\text{Since $k = \Theta_d(n)$,} \\
&= \sum_{1 \leq j \leq \lfloor d/2 \rfloor}\Theta_{d,j}(n^{d-2j}) \\
&= \Theta_d(n^{d-2}).\\
\end{align*}

We will proceed in a similar manner to count the number of $(d-2)$-flats spanned by the point set.

Let $g_j$ be the number of $(d-2($-flats that contain $j$ covering lines.
A $(d-2)$-flat that contains $j$ lines will contain $d-1-2j$ points from the remaining lines.
$g_j$ is equal to the number of ways to choose $j$ lines contained in the $(d-2)$-flat, times the number of ways to choose $d - 1 - 2j$ lines that each contribute $1$ point, times the number of ways to choose $1$ point on each of the $d-1-2j$ lines.
In other words,
\[g_j = {d-1 \choose j}{d - 1 - j \choose d - 1 -2j}k^{d-1-2j}.\]

Let $g$ be the total number of $(d-2)$-flats spanned by the point set.
Since there are $d-1$ covering lines, a $(d-2)$-flat may be spanned by one point from each line.
Since the lines are in general position, no $(d-2)$-flat will contain more than $\lfloor (d-1)/2 \rfloor$ covering lines.
Consequently,
\begin{align*}
g &= \sum_j g_j \\
&= \sum_{0 \leq j \leq \lfloor (d-1)/2 \rfloor}{d-1 \choose j}{d-1-j \choose d - 1 - 2j}k^{d-1-2j} \\
&= \sum_{0 \leq j \leq \lfloor (d-1)/2 \rfloor}\Theta_{d,j}(n^{d-1-2j}) \\
&= \Theta_d(n^{d-1}).\\
\end{align*}

\end{proof}

In place of the disproved conjecture, we propose the following related conjectures.

We say that a set of points is $r$-degenerate if it can be covered by any set of flats (of nonzero dimension) whose dimensions add up to less than $r$.
\begin{conjecture}[Modified weak Purdy]\label{higherWeakDirac}
A set of $n$ points in $\mathbb{E}^d$, that is not $d$-degenerate, contains a point incident to $\Omega(p)$ hyperplanes, where $p$ is the number of $(d-2)$-flats spanned by the set.
\end{conjecture}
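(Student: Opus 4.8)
The plan is to prove the statement by induction on the dimension $d$, with the weak Dirac theorem (Lemma \ref{WeakDirac}) serving as the base case $d=2$: there a $(d-2)$-flat is a single point, so $p=n$, and ``not $2$-degenerate'' is exactly ``not all collinear.'' The engine of the induction is central projection. Fixing a point $P\in S$ and projecting $S\setminus\{P\}$ from $P$ onto a generic hyperplane $M\cong\mathbb{E}^{d-1}$, a spanned hyperplane of $\mathbb{E}^d$ through $P$ maps to a spanned hyperplane of $M$, and this correspondence is a bijection for generic $M$; likewise a spanned $(d-2)$-flat of $\mathbb{E}^d$ not containing $P$ maps to a hyperplane of $M$, with two such flats mapping to the same hyperplane of $M$ exactly when they lie in a common hyperplane through $P$. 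Thus the number of spanned hyperplanes incident to $P$ equals the number of spanned hyperplanes of the projected set $S'$, and the goal reduces to choosing $P$ so that the $p$ flats of $S$ do not collapse, i.e.\ so that $S'$ spans $\Omega(p)$ hyperplanes in $\mathbb{E}^{d-1}$.

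First I would dispose of the generic extreme by averaging. If no hyperplane contains more than $\beta_d n$ points, then Beck's two extremes lemma (Lemma \ref{TwoExtremes}) gives $\Omega(n^d)$ spanned hyperplanes, while the number of $(d-2)$-flats satisfies $p=O(n^{d-1})$. Since each spanned hyperplane carries at least $d$ points, the total number of point--hyperplane incidences is $\Omega(n^d)=\Omega(np)$, so some point lies on $\Omega(p)$ hyperplanes. This is the analogue of the easy half of the weak Dirac argument, and averaging suffices precisely because the configuration is spread out.

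The substantive work is the concentrated regime, where some hyperplane $\Sigma$ holds a constant fraction of the points; here averaging fails, exactly as it does for a near-pencil in the plane, and the good point $P$ must instead be found off the rich flat. Because $S$ is not $d$-degenerate it has points off $\Sigma$, so I would pick such a $P$, project from it, and observe that $S\cap\Sigma$ projects into a single hyperplane $\Sigma'$ of $M$ while the points off $\Sigma$ spread out. Counting the hyperplanes of $M$ spanned by $S'$ then splits into subcases according to how degenerate $\Sigma$ itself is (all of $\Sigma$ on a $(d-2)$-flat, or split among a few lower flats, the analogue of two skew lines), mirroring Cases 2--4 of the proof of Theorem \ref{BeckErdos3}; in each subcase I would use the planar or $(d-1)$-dimensional Beck-Erd\H{o}s bound (Lemma \ref{BeckErdos}) together with Corollary \ref{pigeonholeFlats} to extract the required $\Omega(p)$ distinct projected hyperplanes, that is, to bound the collapse by controlling the richness of hyperplanes through $P$.

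The hard part, and likely the reason the statement is only conjectured, is closing the induction. Two difficulties compound. First, non-$d$-degeneracy does not descend cleanly to $S'$: a cover of $S'$ by positive-dimensional flats pulls back to a cover of $S$ by the cones through $P$ over those flats, and each cone has dimension one larger, so the dimension sum is inflated by the number of cover pieces and the hypothesis needed for the inductive call is not immediate. Second, the naive inductive conclusion only produces a point of $S'$ (a line through $P$) on many hyperplanes, whereas I need the total count of hyperplanes of $S'$ to be $\Omega(p)$; reconciling these seems to require strengthening the inductive statement to a total, Beck-type count compatible with the degeneracy hypothesis at every dimension. I expect that making these two pieces fit — a degeneracy notion stable under projection, and a total hyperplane count that bounds the flat-collapse — is the crux on which a full proof turns.
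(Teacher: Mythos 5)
The statement you are trying to prove is Conjecture~\ref{higherWeakDirac}, which the paper explicitly leaves open: it is proposed in Section~\ref{sec:WeakPurdy} as a replacement for Purdy's disproved conjecture, and no proof is given or claimed. So there is nothing in the paper to compare your argument against, and the only question is whether your proposal closes the problem on its own. It does not, and you correctly diagnose why. Your base case and your treatment of the spread-out regime are sound: when no hyperplane holds $\beta_d n$ points, Lemma~\ref{TwoExtremes} gives $\Omega(n^d)$ spanned hyperplanes, hence $\Omega(n^d)$ incidences, hence a point on $\Omega(n^{d-1})=\Omega(p)$ hyperplanes since $p=O(n^{d-1})$ always. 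The projection dictionary (hyperplanes through $P$ $\leftrightarrow$ spanned hyperplanes of the projected set; $(d-2)$-flats avoiding $P$ collapsing exactly when they lie in a common hyperplane through $P$) is also correct.

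The two gaps you name in the concentrated regime are genuine and, as far as I can tell, fatal to the induction as stated. For the first: if the projected set $S'$ is covered by $r$ flats of dimensions $d_1,\dots,d_r$ with $\sum d_i<d-1$, the pullback covers $S$ by cones of dimensions $d_i+1$, with dimension sum $\sum d_i+r$; this contradicts non-$d$-degeneracy of $S$ only when $r=1$. Already for $d=4$, a projected set covered by two lines (sum $2<3$) pulls back to two planes through $P$ (sum $4$), so the inductive hypothesis simply is not available. For the second: even granting the inductive call, it hands you a single \emph{line through $P$} lying on many hyperplanes, whereas what you need is that the $p$ spanned $(d-2)$-flats of $S$ determine $\Omega(p)$ \emph{distinct} hyperplanes through $P$ --- a total count controlling the collapse multiplicity, which is a strictly stronger statement than the conjecture delivers one dimension down. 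Your instinct that the inductive statement must be strengthened to a Beck--Erd\H{o}s-type total count with a projection-stable degeneracy hypothesis is exactly the obstruction; note that the paper's own Conjecture~\ref{higherBeckErdos} is such a strengthening and is likewise left open. In short: a reasonable plan, an honest accounting of its holes, but not a proof --- which is consistent with the statement's status as a conjecture.
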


Using a similar hypothesis, we conjecture a generalization of the Beck-Erd\H{o}s theorem that embraces our theorem in three dimensions as a special case.

\begin{conjecture}\label{higherBeckErdos}
A set of $n$ points in $\mathbb{E}^d$, having no $d$-degenerate subset of more than $n-k$ points, spans $\Omega(nk^{d-1})$ hyperplanes.
\end{conjecture}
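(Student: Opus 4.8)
The plan is to prove Conjecture \ref{higherBeckErdos} by induction on the dimension $d$, taking the planar Beck-Erd\H{o}s theorem (Lemma \ref{BeckErdos}) as the base case $d=2$ and our Theorem \ref{BeckErdos3} as the case $d=3$. The maximal $d$-degenerate configurations are exactly the coverings by flats whose dimensions sum to $d-1$, and these are indexed by the partitions of $d-1$ into positive parts (for $d=4$: a hyperplane; a plane together with a line; or three lines), generalizing the ``plane or pair of skew lines'' dichotomy of Theorem \ref{BeckErdos3}. As in that proof, I would fix a maximal $d$-degenerate subset $\mathcal{H}$, of one of these types, containing $n-x$ points with $x \geq k$, set $\mathcal{X} = \mathcal{T}\setminus\mathcal{H}$, and aim to exhibit $\Omega(nx^{d-1})$ hyperplanes, which suffices since $x \geq k$. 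The ``spread out'' case, in which no hyperplane carries more than $\beta_d n$ points, is absorbed immediately by Beck's two-extremes lemma (Lemma \ref{TwoExtremes}), which already delivers $\Omega(n^d) \geq \Omega(nk^{d-1})$ hyperplanes.

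The inductive engine would be projection from a point. For a base point $P$, project the remaining points from $P$ onto a hyperplane to obtain a set of at most $n-1$ points in $\mathbb{E}^{d-1}$; the hyperplanes of $\mathbb{E}^d$ through $P$ correspond to the hyperplanes spanned by the projected set, and distinct hyperplanes through $P$ stay distinct. Applying the inductive hypothesis in $\mathbb{E}^{d-1}$ would then yield $\Omega(nx^{d-2})$ distinct hyperplanes through each such $P$. Carrying this out from $\Omega(x)$ suitably chosen base points — supplied by a Corollary \ref{pigeonholeFlats}/Lemma \ref{longLineLemma} style argument in the concentrated cases, and directly from $\mathcal{X}$ in the cases where $\mathcal{H}$ is a single flat, exactly paralleling Cases 2--4 of Theorem \ref{BeckErdos3} — produces $\Omega(x)\cdot\Omega(nx^{d-2}) = \Omega(nx^{d-1})$ incidences between these base points and spanned hyperplanes.

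Two difficulties must be faced. The first is that the non-degeneracy hypothesis does not transfer cleanly under projection: a single $(d-2)$-flat in the image lifts, via the cone over it with apex $P$, to a hyperplane of $\mathbb{E}^d$ through $P$, so large single-flat degeneracies in the image do pull back to $d$-degenerate subsets of $\mathcal{T}$ and are controlled by hypothesis; but a $(d-1)$-degenerate image set covered by $r \geq 2$ flats lifts to a covering whose dimensions sum to the original sum plus $r$, which may exceed $d-1$. Ensuring that the projected configuration genuinely satisfies the inductive hypothesis — rather than merely a single-flat weakening of it — is therefore delicate and is where the clean recursion threatens to break.

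The second, and I expect the main, obstacle is converting the $\Omega(nx^{d-1})$ incidences into $\Omega(nx^{d-1})$ \emph{distinct} hyperplanes. In dimension three this step is free, because Lemma \ref{bichromaticConsequence} turns $\Omega(n^{d-2}x^2)$ incidences into $\Omega(n^{d-2}x^2)$ hyperplanes and $n^{d-2}x^2 = nx^{d-1}$ precisely when $d=3$. For $d>3$ the exponents diverge, $n^{d-2}x^2 > nx^{d-1}$ whenever $x<n$, and the extremal configuration of $n-k$ collinear points together with $k$ points in general position spans only $\Theta(nk^{d-1})$ hyperplanes; so we cannot hope to find $\Omega(n^{d-2}x^2)$ incidences, and feeding merely $\Omega(nx^{d-1})$ incidences into the dual of Theorem \ref{BichromaticAgarwalAronov} is lossy, returning far fewer than $nx^{d-1}$ hyperplanes because the incidence bound alone cannot forbid the incidences from concentrating on a few rich hyperplanes. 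Overcoming this seems to demand a geometric, rather than black-box, distinctness argument: one must bound, for each candidate hyperplane, how many of the $\Omega(x)$ base points it can arise from, using the non-degeneracy of $\mathcal{T}$ to show that the $\Omega(nx^{d-2})$ hyperplanes produced through one base point overlap only negligibly with those produced through another. Establishing such an overlap bound uniformly across all partition-types of the maximal degenerate configuration is the crux, and is, I believe, exactly why the statement remains only a conjecture.
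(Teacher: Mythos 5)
The statement you are addressing is Conjecture~\ref{higherBeckErdos}; the paper offers no proof of it and explicitly leaves it open, so there is no ``paper's own proof'' to compare against. Your proposal is a proof \emph{plan}, not a proof, and by your own account it does not close. That assessment is correct, and your diagnosis of where it breaks is accurate on both counts. First, Lemma~\ref{bichromaticConsequence} converts $\Omega(n^{d-2}k^2)$ incidences into $\Omega(n^{d-2}k^2)$ hyperplanes, and $n^{d-2}k^2 = nk^{d-1}$ only at $d=3$; for $d>3$ the extremal configuration ($n-k$ points on a line plus $k$ generic points) spans only $\Theta(nk^{d-1})$ hyperplanes, and the bichromatic upper bound of Theorem~\ref{BichromaticAgarwalAronov} then caps the available incidences at roughly $n^{d/3}k^{2d/3} = n^{d-2}k^2\cdot(k/n)^{(2d-6)/3}$, which is $o(n^{d-2}k^2)$ when $k=o(n)$. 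So the black-box incidence-counting route that powers Theorem~\ref{BeckErdos3} genuinely cannot supply enough incidences in higher dimensions, and a direct distinctness argument for the hyperplanes through different base points would be needed. Second, projection from a point does not preserve the $d$-degeneracy hypothesis: a multi-flat cover of the image lifts to a cover whose ranks each grow by one, so the inductive hypothesis in $\mathbb{E}^{d-1}$ is not available for the projected set without further work.

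One smaller correction: your list of maximal degenerate types for general $d$ is slightly off. A $d$-degenerate set is covered by flats whose \emph{dimensions} sum to less than $d$, so the maximal covers correspond to partitions of $d-1$ into parts of size at least one (dimension of each flat), i.e.\ for $d=4$ a hyperplane, a plane plus a line, or three lines --- which is what you wrote --- but note that for the case analysis one must also handle covers realizing every partition with sum strictly less than $d-1$ that happen to be maximal for the particular point set, and the number of such combinatorial types grows with $d$. This multiplies the case analysis of Theorem~\ref{BeckErdos3} rather than merely transplanting it. In short: your proposal correctly identifies the obstructions, but it does not prove the conjecture, and neither does the paper.
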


\section{Acknowledgments}
We would like to thank Ryan Anderson for several helpful comments and suggestions.
We would also like to thank an anonymous referee for pointing out an error in the  proof of what is now Theorem \ref{BichromaticAgarwalAronov}, and for significant help with the presentation.

\bibliographystyle{IEEEtran}
\bibliography{HyperplanesSpannedByPointArrangements}

\end{document}